\title{\uppercase{\textbf{\large{$\bb{A}^1$-connectivity of motivic spaces}}}}
\author{\uppercase{Tess Bouis} \hspace{0.2cm}and\hspace{0.1cm} \uppercase{Arnab Kundu}}
\date{}
\definecolor{imperialred}{RGB}{237, 41, 57}
\definecolor{royalblue}{RGB}{64, 106, 212}
\definecolor{link}{RGB}{11,0,128}
\definecolor{gren}{RGB}{32,130,63}
\DeclareFontFamily{T1}{cbgreek}{}
\DeclareFontShape{T1}{cbgreek}{m}{n}{<-6>  grmn0500 <6-7> grmn0600 <7-8> grmn0700 <8-9> grmn0800 <9-10> grmn0900 <10-12> grmn1000 <12-17> grmn1200 <17-> grmn1728}{}
\DeclareSymbolFont{quadratics}{T1}{cbgreek}{m}{n}
\DeclareMathSymbol{\qoppa}{\mathord}{quadratics}{19}
\DeclareMathSymbol{\Qoppa}{\mathord}{quadratics}{21}
\DeclareMathAlphabet\matheu{U}{eus}{m}{n}
\DeclareMathAlphabet{\mathmm}{U}{mmambb}{m}{n}
\newcommand{\bb}[1]{\mathbb{#1}}
\newcommand{\bun}{\begin{itemize}}
\newcommand{\bn}{\begin{enumerate}}
\newcommand{\bt}{\begin{thm}}
\newcommand{\bl}{\begin{lem}}
\newcommand{\bp}{\begin{prop}}
\newcommand{\bc}{\begin{cor}}
\newcommand{\bd}{\begin{teqn}}
\newcommand{\bud}{\begin{teqn*}}	
\newcommand{\bs}{\begin{proof}}
\newcommand{\br}{\begin{rem}}
\newcommand{\bdf}{\begin{defn}}
\newcommand{\bcj}{\begin{conj}}
\newcommand{\eun}{\end{itemize}}
\newcommand{\en}{\end{enumerate}}
\newcommand{\et}{\end{thm}}
\newcommand{\el}{\end{lem}}
\newcommand{\ep}{\end{prop}}
\newcommand{\ec}{\end{cor}}
\newcommand{\ed}{\end{teqn}}
\newcommand{\eud}{\end{teqn*}}
\newcommand{\es}{\end{proof}}
\newcommand{\er}{\end{rem}}
\newcommand{\edf}{\end{defn}}
\newcommand{\ecj}{\end{conj}}
\DeclareMathOperator{\spec}{Spec}
\newlength{\outermargin} \setlength{\outermargin}{2.5cm}
\newlength{\mar} \setlength{\mar}{1cm}
\newlength{\len}
\newlength{\temp}\setlength{\temp}{\paperwidth}
\newtheorem{theorem}{Theorem}[section]
\newtheorem{lemma}[theorem]{Lemma}
\newtheorem{corollary}[theorem]{Corollary}
\newtheorem{proposition}[theorem]{Proposition}
\theoremstyle{definition}
\newtheorem{remark}[theorem]{Remark}
\newtheorem{example}[theorem]{Example}
\newtheorem{definition}[theorem]{Definition}
\newtheorem{conjecture}[theorem]{Conjecture}
\DeclareMathOperator{\N}{\mathbb{N}}
\renewcommand{\epsilon}{\varepsilon}
\DeclareMathOperator{\mot}{mot}
\DeclareMathOperator{\A1}{\mathbb{A}^1}
\let\H\relax
\DeclareMathOperator{\H}{\mathrm{H}}
\let\P\relax
\DeclareMathOperator{\P}{\mathcal{P}}
\renewcommand{\ge}{\geqslant}
\renewcommand{\le}{\leqslant}
\DeclareFontFamily{U}{MnSymbolC}{}
\DeclareFontShape{U}{MnSymbolC}{m}{n}{
	<-5.5> MnSymbolC5
	<5.5-6.5> MnSymbolC6
	<6.5-7.5> MnSymbolC7
	<7.5-8.5> MnSymbolC8
	<8.5-9.5> MnSymbolC9
	<9.5-11.5> MnSymbolC10
	<11.5-> MnSymbolCb12
}{}
\DeclareSymbolFontAlphabet{\mathbb}{AMSb}
\DeclareSymbolFontAlphabet{\mathbbl}{bbold}
\newcommand{\ccite}[2]{\cite[#2]{#1}}
\newcommand{\stacks}[1]{\cite[\href{https://stacks.math.columbia.edu/tag/#1}{Tag #1}]{stacks-project}}
\numberwithin{equation}{theorem}
\begin{document}

\maketitle

\begingroup
\renewcommand{\thefootnote}{}

\footnotetext{\textit{Mathematics Subject Classification 2020}: 14F42 (primary); 14F35, 55Q05 (secondary).}
\footnotetext{\textit{Keywords}: motivic homotopy theory, unstable $\mathbb{A}^1$-connectivity, algebraic $K$-theory, Gersten injectivity, slice filtration.}

\endgroup
	
	\pagestyle{fancy}
	\fancyhead[EC]{TESS BOUIS AND ARNAB KUNDU}
	\fancyhead[OC]{\uppercase{$\bb{A}^1$-connectivity of motivic spaces}}
	\fancyfoot[C]{\thepage}
	
\begin{abstract}
We prove a version of Morel's unstable $\mathbb{A}^1$-connectivity theorem over arbitrary base schemes. In the stable setting, this recovers (and simplifies the proof of) the known connectivity bounds due to Morel, Schmidt--Strunk, Deshmukh--Hogadi--Kulkarni--Yadav, and Druzhinin, and extends them to possibly non-noetherian schemes. \hbox{Using} the recent work of Bachmann--Elmanto--Morrow, this also implies that the slice filtration on homotopy $K$\nobreakdash-theory is convergent for qcqs schemes of finite valuative dimension.
\end{abstract}

\section{Introduction}

Motivic homotopy theory is an analogue of homotopy theory in algebraic geometry, where the affine line plays the role of the unit interval. It was introduced by Morel and Voevodsky \cite{morel_homotopy_1999} as a way to import ideas from topology to the study of algebraic varieties, and ultimately led to the proof of the Bloch--Kato conjecture relating Milnor $K$-theory and Galois cohomology \cite{voevodsky_motivic_2011}. 

The basic objects of study in motivic homotopy theory are {\it motivic spaces}, {\it i.e.}, $\mathbb{A}^1$-invariant Nisnevich sheaves of spaces on smooth schemes over a base. 
To pass from Nisnevich sheaves to motivic spaces, one typically uses the localisation functor $L_{\text{mot}}$, which is however somewhat inexplicit. In particular, because it enforces simultaneously $\mathbb{A}^1$-invariance (which adds information in the homological direction) and Nisnevich descent (which adds information in the cohomological direction), it may {\it a priori} spread in infinitely many degrees in both directions. 

This article is motivated by the following conjecture of Morel.

\begin{conjecture}[Stable $\mathbb{A}^1$-connectivity, \ccite{morel_stable_a1_connectivity}{Conjecture~2} \cite{ayoub_counter-example_A1_connectivity} \cite{schmidt_strunk}]
    Let $S$ be a noetherian scheme of Krull dimension $d$, and \hbox{$n \geqslant 0$} be an integer. Then for any $n$-connective space $E \in \mathcal{P}_{\text{Nis}}(\text{Sm}_S)$, the motivic spectrum $L_{\text{mot}} E \in \text{SH}(S)$ is $(n-d)$\nobreakdash-connective.
\end{conjecture}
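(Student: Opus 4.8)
The plan is to deduce the conjecture from an \emph{unstable} $\mathbb{A}^1$-connectivity theorem — that the unstable motivic localisation $L_{\mathbb{A}^1}\colon\mathcal{P}_{\text{Nis}}(\text{Sm}_S)_\ast\to\Spc(S)_\ast$ preserves $n$-connectivity whenever $S$ is qcqs of finite valuative dimension $d$ — and then to stabilise. For the unstable statement I would work with the explicit model $L_{\mathbb{A}^1}\simeq\colim_k\,(L_{\text{Nis}}\circ\mathrm{Sing}^{\mathbb{A}^1})^{\circ k}$, where $\mathrm{Sing}^{\mathbb{A}^1}$ is the geometric realisation of the diagram of pullbacks along the projections $X\times\mathbb{A}^\bullet\to X$ and $L_{\text{Nis}}$ re-enforces Nisnevich descent. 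Since $\mathrm{Sing}^{\mathbb{A}^1}$ is built purely from colimits and pullbacks along $\mathbb{A}^1$-projections, all of the connectivity loss is concentrated in $L_{\text{Nis}}$, and by Nisnevich descent the loss incurred by one round is governed by the Nisnevich cohomological dimension of the (henselian, essentially smooth, local) $S$-schemes that appear. The first technical step is thus to bound this dimension by $d$: a limit/continuity argument reduces the bound to the cases of a noetherian finite-dimensional base and of a (henselian) valuation ring, and in each case one bounds the cohomological dimension of the local rings in question by $d$.

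The main obstacle is that this per-round estimate, applied naively to the infinite colimit $\colim_k(L_{\text{Nis}}\circ\mathrm{Sing}^{\mathbb{A}^1})^{\circ k}$, loses a copy of $d$ at every stage and therefore gives nothing; the heart of the proof is to show that this loss does not accumulate. I would attempt Morel's bootstrapping strategy, now over an arbitrary base: prove simultaneously, by induction on $n$, both the connectivity statement and the assertion that the $\mathbb{A}^1$-homotopy sheaves of an $n$-connective object are \emph{strictly $\mathbb{A}^1$-invariant}, meaning that all of their Nisnevich cohomology presheaves are already $\mathbb{A}^1$-invariant. A motivic Hurewicz theorem identifies the bottom homotopy sheaf with an $\mathbb{A}^1$-homology sheaf and reduces the inductive step to the additive setting, where one shows that the $\mathbb{A}^1$-chain-complex functor is right $t$-exact up to a shift by $d$ and, crucially, that for a \emph{strictly} $\mathbb{A}^1$-invariant input no further shift is incurred — because on the essentially smooth henselian local rings that control the cohomology of such a sheaf, $\mathbb{A}^1$-invariance forces the relevant higher cohomology to vanish outright, so the estimate telescopes to a bounded (and, over a regular base, zero) total loss. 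This is where the non-noetherian case is genuinely new: over a perfect field one obtains strict $\mathbb{A}^1$-invariance and the cohomological-dimension bound from Gabber's presentation lemma and the resulting Gersten resolution, but over an arbitrary qcqs base there is no Gersten resolution, and one must instead prove a direct vanishing theorem for the cohomology of $\mathbb{A}^1$-invariant Nisnevich sheaves on henselian valuation rings, using the recently developed control of the Nisnevich and cdh topologies over non-noetherian schemes. I expect this vanishing statement over valuation rings to be the real difficulty.

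Finally, to pass from the unstable theorem to the stated conjecture I would factor the functor $L_{\text{mot}}\colon\mathcal{P}_{\text{Nis}}(\text{Sm}_S)_\ast\to\mathrm{SH}(S)$ of the conjecture as the unstable motivic localisation $L_{\mathbb{A}^1}$ above followed by $\Sigma^\infty_{\mathbb{P}^1}$, and show that the latter preserves $n$-connectivity for the homotopy $t$-structure on $\mathrm{SH}(S)$. Using $\mathbb{P}^1\simeq S^1\wedge\mathbb{G}_m$ and the presentation $\mathrm{SH}(S)=\colim\bigl(\Spc(S)_\ast\xrightarrow{-\wedge\mathbb{P}^1}\Spc(S)_\ast\to\cdots\bigr)$, suspension by $S^1$ raises connectivity while smashing with the $0$-connected object $\mathbb{G}_m$ preserves it, so each term of the colimit is $n$-connective; identifying the $n$-connective part of $\mathrm{SH}(S)$ with the colimit of the $n$-connective subcategories then comes down to the statement that the motivically localised $\mathbb{G}_m$-loop functor $\Omega_{\mathbb{G}_m}=\underline{\mathrm{Hom}}(\mathbb{G}_m,-)$ does not decrease connectivity — which is exactly the unstable $\mathbb{A}^1$-connectivity theorem applied to an internal-hom object. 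Combining these, $L_{\text{mot}}E$ is $n$-connective in $\mathrm{SH}(S)$ whenever $E$ is; since a regular noetherian scheme has finite valuative dimension equal to its Krull dimension, this recovers Morel's conjecture.
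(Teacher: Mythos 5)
The statement you were given is presented in the paper as a \emph{conjecture}, and the paper does not prove it --- on the contrary, it explicitly recalls that Ayoub constructed counterexamples to this conjecture for bases of dimension at least two, and that regularity of $S$ is not the relevant hypothesis. What the paper proves instead is the \emph{shifted} version (Corollary~\ref{corollaryshiftedstable}, deduced from Theorem~\ref{theoremunstableconnectivity}): for $S$ qcqs of valuative dimension $d$, an $n$-connective $E$ has $(n-d)$-connective motivic localisation. Your proposal aims at the unshifted bound, so it must break somewhere, and it breaks at the decisive point.

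The fatal step is the bootstrapping claim that for strictly $\mathbb{A}^1$-invariant inputs ``no further shift is incurred'' and that the per-round loss ``telescopes to a bounded (and, over a regular base, zero) total loss.'' Over a (perfect) field this is exactly Morel's theorem, proved via Gabber's presentation lemma and the resulting Gersten resolution; over a base of positive dimension it is false, and Ayoub's counterexamples are precisely counterexamples to this kind of vanishing: there are $\mathbb{A}^1$-invariant sheaves on smooth schemes over a regular two-dimensional base whose cohomology on essentially smooth henselian local schemes does not vanish in the range you need, so no presentation-lemma refinement can recover the unshifted bound. The paper's actual route accepts a one-time loss of $d$: $n$-connectivity implies ``perverse $n$-connectivity'' (connectivity shifted by the valuative dimension of the local scheme at hand); $L_{\text{mot}}$ preserves perverse connectivity (Corollary~\ref{corollaryLmotpreserveperverseconnectivity}, a Clausen--Mathew-type bound of Nisnevich cohomological dimension by valuative dimension, with a Bousfield--Kan argument absorbing the dimension growth of $X\times\Delta^m$ under $\mathrm{Sing}^{\mathbb{A}^1}$); and Gersten \emph{injectivity} for deflatable sheaves (Proposition~\ref{propositiongersteninjectivity}), combined with the fact that the relevant generic local schemes have valuative dimension at most $d$, upgrades perverse connectivity to genuine $(n-d)$-connectivity. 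Your final stabilisation paragraph is reasonable in outline, but it is moot: the unstable input you feed into it is unprovable as stated.
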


When $S$ is the spectrum of a field, this conjecture was proved by Morel in \cite{morel_introduction_A1_homotopy,morel_stable_a1_connectivity}, and serves as a foundation for several results in motivic homotopy theory over a field \cite{morel_a1_topology_field}. 
Although Morel's original conjecture required $S$ to be regular and did not mention the dimension $d$ of $S$, Ayoub explained why the regularity hypothesis on $S$ should not be related to stable $\mathbb{A}^1$-connectivity, and exhibited counterexamples to this conjecture when $S$ is a scheme of positive dimension \cite{ayoub_counter-example_A1_connectivity}. A corrected version of Morel's conjecture, where the connectivity bound is now shifted by the dimension of $S$ to take into account Ayoub's counterexamples, was then formulated by Schmidt and Strunk \cite{schmidt_strunk}. This {\it shifted stable $\mathbb{A}^1$-connectivity conjecture} was proved when $S$ is a Dedekind scheme with infinite residue fields by Schmidt--Strunk \cite{schmidt_strunk}, when $S$ is the spectrum of a noetherian domain with infinite residue fields by Deshmukh--Hogadi--Kulkarni--Yadav \cite{deshmukh-hogadi-kulkarni-yadav_presentation_lemma_noetherian_domains}, and for general noetherian schemes $S$ by Druzhinin \cite{druzhinin_a1_connectivity}. 

Our main result is an unstable refinement of Druzhinin's shifted stable $\mathbb{A}^1$-connectivity theorem. We formulate this result using the notion of valuative dimension, which is better-behaved than the Krull dimension for non-noetherian schemes. Note that the valuative dimension of a scheme is always at least equal to its Krull dimension, and that both notions agree on noetherian schemes.

\begin{theorem}[Shifted unstable $\mathbb{A}^1$-connectivity; see Theorem~\ref{theoremunstableconnectivity}]\label{theoremintrounstable}
    Let $S$ be a qcqs scheme of valuative dimension~$d$, and $n \geqslant 0$ be an integer. Then for any $n$-connective space $E \in \mathcal{P}_{\emph{Nis}}(\emph{Sm}_S)$, the motivic space $L_{\emph{mot}} E \in \emph{H}(S)$ is $(n-d)$-connective as an object of $\mathcal{P}_{\emph{Nis}}(\emph{Sm}_S)$.
\end{theorem}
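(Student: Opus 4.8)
The plan is to reduce the unstable $\mathbb{A}^1$-connectivity statement to a statement about Nisnevich-local homotopy sheaves, and to control those via a \emph{geometric presentation lemma} of Gabber–Nisnevich–Voevodsky type valid over qcqs bases of bounded valuative dimension. First I would recall that $L_{\mathrm{mot}}$ can be computed as a transfinite composite of the endofunctor $X \mapsto L_{\mathrm{Nis}}(\mathrm{Sing}^{\mathbb{A}^1} X)$, where $\mathrm{Sing}^{\mathbb{A}^1}$ is the singular (Suslin–Voevodsky) construction; each application of $\mathrm{Sing}^{\mathbb{A}^1}$ preserves connectivity since $\mathbb{A}^1$ is a contractible interval object, so the entire burden is to show that $L_{\mathrm{Nis}}$ applied to a (suitably $\mathbb{A}^1$-local-ish) $n$-connective presheaf loses at most $d$ degrees of connectivity. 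This is false for an arbitrary presheaf — Nisnevich cohomological dimension is $d$ and one would only get $(n-d)$-connectivity \emph{per application}, which does not survive the transfinite composite — so the key point is that the intermediate presheaves $\mathrm{Sing}^{\mathbb{A}^1}(\cdots)$ are $\mathbb{A}^1$-invariant in a range, and $\mathbb{A}^1$-invariant Nisnevich sheaves have much better connectivity behaviour.

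The technical heart is therefore the following kind of statement (the ``unstable connectivity estimate''): if $F$ is an $\mathbb{A}^1$-invariant Nisnevich sheaf of spaces on $\mathrm{Sm}_S$ that is $n$-connective as a presheaf (equivalently, its homotopy sheaves $\underline{\pi}_k F$ vanish for $k \le n-1$ before Nisnevich sheafification — or rather, one works with the semi-local / essentially-smooth points), then $\underline{\pi}_k(L_{\mathrm{Nis}} F) = 0$ for $k < n - d$. To prove this I would follow the Morel–Schmidt–Strunk–Druzhinin strategy: use a presentation lemma to show that for a smooth affine $S$-scheme $U$ of relative dimension $r$ and a closed subscheme $Z \subseteq U$, after Nisnevich-localising, the pair $(U, U \setminus Z)$ looks locally like $(\mathbb{A}^1_V, \mathbb{G}_m{\times}\,V)$ for some smooth $V$ of one lower dimension. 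Iterating this ``one variable at a time'' and using $\mathbb{A}^1$-invariance to kill the extra $\mathbb{A}^1$-direction at each stage, one trades $r$ for $0$ at the cost of $d = \mathrm{valdim}(S)$ — because the base $S$ itself still has valuative dimension $d$, and the fibrewise presentation lemma only removes the relative dimension. Concretely this runs through the Nisnevich-local fibre sequences / Gersten-type filtration associated to the pair, reducing the vanishing of $\underline{\pi}_k(L_{\mathrm{Nis}} F)$ on $r$-dimensional objects to vanishing of $\underline{\pi}_{k+1}$ on $(r{-}1)$-dimensional ones, plus the base case where the only remaining obstruction is the Nisnevich cohomology of $S$, bounded by $d$.

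The main obstacle — and where the non-noetherian generality forces real work — is establishing the presentation lemma over a qcqs base of finite \emph{valuative} dimension without noetherian hypotheses. The classical Gabber presentation lemma, and Druzhinin's extension, rely on noetherian approximation, prime avoidance, and properties of the relative dimension that are delicate for non-noetherian schemes; the replacement of Krull by valuative dimension is precisely what makes the bookkeeping work, since valuative dimension behaves well under limits of schemes and controls the Nisnevich cohomological dimension even in the non-noetherian setting. I expect the proof to proceed by: (i) reducing to finitely presented $S$ over $\mathbb{Z}$ by continuity/noetherian approximation, being careful that $\mathbb{A}^1$-invariant Nisnevich sheaves and the functor $L_{\mathrm{mot}}$ are compatible with filtered colimits of base schemes; (ii) invoking (a qcqs version of) the presentation lemma there; and (iii) a spectral-sequence or Postnikov-tower argument assembling the $r$-to-$(r{-}1)$ reduction into the final $(n{-}d)$ bound, the subtlety being to keep the connectivity loss additive-once ($-d$ total) rather than compounding over the transfinite construction of $L_{\mathrm{mot}}$. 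The stable consequences (recovering Morel–Schmidt–Strunk–Deshmukh--Hogadi--Kulkarni--Yadav–Druzhinin, and the convergence of the slice filtration on $\mathrm{KH}$ via Bachmann–Elmanto–Morrow) should then be formal, by looping down / stabilising the unstable estimate.
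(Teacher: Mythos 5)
You have the right skeleton---$L_{\mathrm{mot}}=(L_{\mathrm{Nis}}\,\mathrm{Sing}^{\mathbb{A}^1})^{\circ\mathbb{N}}$ followed by a Gersten/presentation-lemma step---and you correctly identify the central danger: a loss of connectivity per iteration would compound over the countable composite. But your proposed escape (``the intermediate presheaves are $\mathbb{A}^1$-invariant in a range'') is precisely the missing idea, and as stated it would not work: the intermediate stages $(L_{\mathrm{Nis}}\,\mathrm{Sing}^{\mathbb{A}^1})^{\circ k}E$ are not $\mathbb{A}^1$-invariant in any range you can use, and the presentation-lemma machinery you invoke is designed for genuinely $\mathbb{A}^1$-invariant sheaves. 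What closes the gap in the paper is a connectivity invariant preserved \emph{on the nose} by each iteration: call $E$ perverse $n$-connective if $E(X^h)$ is $(n-\dim_v X)$-connective for every local essentially smooth $S$-scheme $X$. This is a stalkwise condition, so $L_{\mathrm{Nis}}$ preserves it tautologically; and $\mathrm{Sing}^{\mathbb{A}^1}$ preserves it because a Clausen--Mathew-type descent bound (with Krull dimension replaced by valuative dimension) shows that $E(X^h\times\Delta^m)$ is $(n-\dim_v X-m)$-connective, and the extra $-m$ is exactly absorbed by the Bousfield--Kan spectral sequence for the geometric realisation. Thus nothing is lost at any stage, and the shift by $d$ enters only once, at the very end: perverse $n$-connectivity of $L_{\mathrm{mot}}E$ gives the bound $n-\dim_v X_\eta^h$ at the generic point $\eta$ of the special fibre of each henselian local $X$; one then shows $\dim_v X_\eta^h\leqslant d=\dim_v S$ (a Gauss-valuation argument), and Gersten injectivity for the $\mathbb{A}^1$-invariant sheaf $L_{\mathrm{mot}}E$ propagates the bound from $X_\eta$ to all of $X$. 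Your plan contains the last ingredient but not the first, and without it the estimate does not survive the iteration.

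Two further cautions. First, your reduction (i) by noetherian approximation does not obviously preserve the hypothesis: if $S=\lim_i S_i$ with $S_i$ of finite type over $\mathbb{Z}$, the $S_i$ will in general have dimension much larger than $\dim_v S$, so the approximated statement is weaker than what you need; the paper avoids approximation entirely by working with valuative dimension over the given base. Second, the relative dimension over $S$ is not discharged by an ``$r$-to-$(r-1)$'' induction through cofibre sequences of pairs; it is handled in one step by the Gersten injectivity statement (triviality of the kernel of restriction to the local scheme at the generic point of the special fibre, deduced from a Nisnevich-local presentation lemma valid over an arbitrary base), which reduces everything to points of relative dimension zero over $S$.
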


Theorem~\ref{theoremintrounstable} was previously known when $S$ is the spectrum of a field. For perfect fields, this was first proved by Morel \cite[Theorem~5.38]{morel_a1_topology_field} as a consequence of the main results of \cite{morel_a1_topology_field}. 
More direct proofs, which also hold for non-perfect fields, were later on provided by Asok \cite{asok_unstable_2009} and Ayoub \cite{ayoub_A1-algebraic_2024}. Here, we follow Ayoub's strategy, and adapt it to the case of an arbitrary base scheme $S$. The two main ingredients in doing so are the results of Clausen--Mathew \cite{clausen_hyperdescent_2021} and the presentation lemma over a general base proved in \cite{bouis_beilinson_2025}; see the outline below for more details.

\subsection*{Outline of the proof.}

\vspace{-.4cm}

\[
\scalebox{1.06}{$
\smash{\substack{E\\ n\text{-connective}}}
\;\Longrightarrow\;
\smash{\substack{E\\ \text{perverse }n\text{-connective}}}
\;\underset{\text{Coro.\ 2.6}}{\Longrightarrow}\;
\smash{\substack{L_{\mathrm{mot}}E\\ \text{perverse }n\text{-connective}}}
\;\Longrightarrow\;
\smash{\substack{L_{\mathrm{mot}}E\\ \text{generically }(n-d)\text{-connective}}}
\;\underset{\text{Gersten}}{\Longrightarrow}\;
\smash{\substack{L_{\mathrm{mot}}E\\ (n-d)\text{-connective}}}
$}
\]

In Section~\ref{sectionperverse}, we introduce the notion of perverse $n$-connectivity for spaces, and prove that it is preserved by the motivic localisation functor $L_{\text{mot}}$ from spaces to motivic spaces (Corollary~\ref{corollaryLmotpreserveperverseconnectivity}). In Section~\ref{sectionunstabletheorem}, we use this result to prove the shifted unstable $\mathbb{A}^1$-connectivity theorem (Theorem~\ref{theoremunstableconnectivity}), as well as consequences for stable $\mathbb{A}^1$\nobreakdash-connectivity (Corollary~\ref{corollaryshiftedstable}) and convergence of the slice filtration on homotopy $K$-theory (Corollary~\ref{corollaryconvergenceslice}).

\vspace{-\parindent}
\hspace{\parindent}

\subsection*{Notation and Conventions.}

\vspace{-\parindent}
\hspace{\parindent}

We use the word ``anima'' for spaces/$\infty$-groupoids, and we denote by $\text{Ani}$ the $\infty$-category of anima. We write $\P(\mathcal{C})$ for the $\infty$-category of presheaves of anima on $\mathcal{C}$. If $\tau$ is a Grothendieck topology on $\mathcal{C}$, we denote by \hbox{$\P_{\tau}(\mathcal{C}) \subseteq \P(\mathcal{C})$} the full subcategory of $\tau$-sheaves. Following \cite{morel_homotopy_1999} (see \cite[Appendix~C]{hoyois_quadratic_2014} for a treatment without noetherian hypotheses), given a scheme $S$, a {\it space} is an object of the category $\mathcal{P}_{\text{Nis}}(\text{Sm}_S)$, the category of {\it motivic spaces} $\text{H}(S) \subseteq \mathcal{P}_{\text{Nis}}(\text{Sm}_S)$ is the full subcategory of $\mathbb{A}^1$-invariant spaces, and the category of {\it motivic spectra} $\text{SH}(S)$ is obtained by formally inverting $\mathbb{P}^1_S$ (seen as a motivic space pointed at infinity) in the category $\text{H}(S)_\ast$ of pointed motivic spaces. All spaces $E \in \mathcal{P}_{\text{Nis}}(\text{Sm}_S)$ are implicitly extended from smooth to ind-smooth $S$-schemes by taking cofiltered limits, so that it is possible to evaluate $E$ on (henselian) local ind-smooth $S$-schemes. 

\section{Perverse connectivity is preserved by $\bb{A}^1$-localisation}\label{sectionperverse}

In this section, we prove that the motivic localisation functor $L_{\mot}$ preserves perverse $n$-connective objects (Corollary~\ref{corollaryLmotpreserveperverseconnectivity}) as a consequence of a variant of a result of Clausen--Mathew (Theorem~\ref{theoremClausenMathew}). Our definition of perverse connectivity (Definition~\ref{definitionperverseconnectivity}) is inspired by the perverse $t$-structures in stable motivic homotopy theory, as constructed in \cite[Section~2.2.4]{ayoub_sixI_2007} and \cite[Section~2.3]{bondarko_dimensional_2017}, and by Ayoub's notion of weak connectivity \cite{ayoub_P^1_connectivity,ayoub_A1-algebraic_2024}. We formulate this definition using the notion of valuative dimension, as introduced by Jaffard for commutative rings \cite[Chapter~IV]{jaffard_theorie_1960} and generalised to schemes in \cite[Section~2.3]{elmanto_cdh_2021}. We refer to these references for the definition and main properties of the valuative dimension. Note that the only property of the valuative dimension that we will use, and which fails for the Krull dimension on general non-noetherian schemes, is the fact that $\dim_v(\mathbb{A}^1_X)=\dim_v(X)+1$ for any scheme $X$.

\begin{definition}[Perverse connectivity]\label{definitionperverseconnectivity}
    Let $S$ be a scheme, and $n \geqslant 0$ be an integer. A presheaf $E \in \mathcal{P}(\text{Sm}_S)$ is {\it perverse $n$-connective} if any henselian local essentially smooth $S$-scheme $X$ of valuative dimension $d$ satisfies that the anima $E(X)$ is $(n-d)$-connective. Similarly, a space $E \in \mathcal{P}_{\text{Nis}}(\text{Sm}_S)$ (resp. a motivic space $E \in \text{H}(S)$) is {\it perverse $n$-connective} if its underlying presheaf $E \in \mathcal{P}(\text{Sm}_S)$ is perverse $n$-connective. Note that this notion also makes sense for pointed objects. 
\end{definition}

\begin{remark}\label{remarkperverseconnectivityNisnevichsheafification}
    Let $S$ be a scheme, and $n \geqslant 0$. Being perverse $n$-connective is a Nisnevich-local condition. In particular, a presheaf $E \in \mathcal{P}(\text{Sm}_S)$ is perverse $n$-connective if and only if the associated space $L_{\text{Nis}} E \in \mathcal{P}_{\text{Nis}}(\text{Sm}_S)$ is perverse $n$-connective.
\end{remark}

The following lemma will be used in Theorem~\ref{theoremClausenMathew}.

\begin{lemma}\label{lemmaCMwithvaldim}
    Let $R$ be a local ring, and $I \subseteq R$ be a finitely generated ideal contained in the maximal ideal of $R$. If $\spec(R)$ is of valuative dimension $d$, then $\spec(R) \setminus V(I)$ is of valuative dimension at most $d-1$.
\end{lemma}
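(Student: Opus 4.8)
The plan is to reduce the statement to the case of a local domain and then argue with valuation rings, using only the valuation-theoretic description of the valuative dimension (due to Jaffard) and the behaviour of Krull dimension under composition of valuation rings.

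First I would do two reductions. Write $I=(f_1,\dots,f_r)$, so that $\spec(R)\setminus V(I)=\bigcup_{i=1}^r D(f_i)$; since the valuative dimension of a scheme is the supremum of the valuative dimensions of its local rings, it is computed on any open affine cover, whence $\dim_v(\spec(R)\setminus V(I))=\max_i \dim_v(R_{f_i})$ and each $f_i$ lies in the maximal ideal $\mathfrak{m}$ of $R$. It therefore suffices to prove $\dim_v(R_f)\le d-1$ for an arbitrary $f\in\mathfrak{m}$. Second, since the valuative dimension of a ring equals the supremum of the valuative dimensions of its quotients by minimal primes (this follows from the corresponding statement for Krull dimension applied to polynomial rings, using $A[\underline{x}]/\mathfrak{p}A[\underline{x}]=(A/\mathfrak{p})[\underline{x}]$), and since the minimal primes of $R_f$ are exactly the minimal primes $\mathfrak{p}$ of $R$ with $f\notin\mathfrak{p}$, with $R_f/\mathfrak{p}R_f\cong(R/\mathfrak{p})_f$, we are reduced to the following key statement: if $A$ is a local domain with fraction field $K$ and maximal ideal $\mathfrak{m}$, and $0\ne f\in\mathfrak{m}$, then $\dim_v(A_f)\le\dim_v(A)-1$. (If no minimal prime of $R$ avoids $f$, then $R_f=0$ and there is nothing to prove.)

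For the key statement I would use that $\dim_v(A_f)=\sup\{\dim V : A_f\subseteq V\subseteq K,\ V\text{ a valuation ring}\}$. Let $V$ be such a valuation ring; then $f\in V^\times$, so its center $\mathfrak{p}:=\mathfrak{m}_V\cap A$ does not contain $f$, hence $\mathfrak{p}\subsetneq\mathfrak{m}$. I claim $\dim_v(A)\ge\dim V+1$. Set $B:=A/\mathfrak{p}$, a local domain which is not a field, with fraction field $F$; the composite $A\hookrightarrow V\twoheadrightarrow k:=V/\mathfrak{m}_V$ has kernel $\mathfrak{p}$, so $B$ — and hence $F$ — embeds into $k$. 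Choose a valuation ring $W$ of $F$ dominating $B$; then $\mathfrak{m}_W\cap B=\mathfrak{m}_B\ne 0$, so $\dim W\ge 1$. Extend $W$ to a valuation ring $U$ of $k$ with $U\cap F=W$; then $\mathfrak{m}_U\cap W=\mathfrak{m}_W\ne 0$, so $\dim U\ge 1$ and $B\subseteq W\subseteq U$. Finally, let $\widetilde{V}\subseteq K$ be the preimage of $U$ under $V\twoheadrightarrow k$: by the composition of valuation rings, $\widetilde{V}$ is a valuation ring of $K$ with $\dim\widetilde{V}=\dim V+\dim U\ge\dim V+1$, and $A\subseteq\widetilde{V}$ because the image of $A$ in $k$ is $B\subseteq U$. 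Hence $\dim_v(A)\ge\dim\widetilde{V}\ge\dim V+1$; since $V$ was arbitrary, $\dim_v(A_f)\le\dim_v(A)-1$, which finishes the reduction chain.

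The main obstacle is the argument of the third paragraph: producing, from a valuation ring $V$ of $K$ that contains $A$ but inverts $f$, a strictly larger valuation ring still containing $A$, by composing $V$ with a positive-dimensional valuation ring of its residue field that dominates the image $A/\mathfrak{p}$. The two reduction steps are routine once one knows that the valuative dimension is computed on affine open covers and via quotients by minimal primes.
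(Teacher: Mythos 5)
Your proof is correct and follows essentially the same route as the paper: reduce to principal opens $D(f)$ with $f$ in the maximal ideal, pass to quotients by primes to reach the domain case, and then show that every valuation ring $V$ containing the localised domain extends to one of strictly larger rank containing the original domain. The only difference is that you spell out the key extension step (composing $V$ with a positive-rank valuation of its residue field dominating $A/(\mathfrak{m}_V\cap A)$), which the paper merely asserts.
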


\begin{proof}
    By \cite[Proposition~2.3.2\,(1)]{elmanto_cdh_2021}, the statement is Zariski-local on $\spec(R) \setminus V(I)$, so it suffices to prove that $\dim_v(R[\tfrac{1}{f}]) + 1 \leqslant \dim_v(R)$ for $f \in I$. Let $\mathfrak{p}$ be a prime ideal of $R[\tfrac{1}{f}]$, which restricts to a prime ideal of $R$, which we also denote $\mathfrak{p}$. Given a valuation ring $V$ of rank $r$ such that $R[\tfrac{1}{f}]/\mathfrak{p} \subseteq V \subseteq \text{Frac}(R[\tfrac{1}{f}]/\mathfrak{p})$, there exists a valuation ring extension $V'$ of $V$ of rank $r+1$ such that $R/\mathfrak{p} \subseteq V' \subseteq \text{Frac}(R/\mathfrak{p})$. Consequently, 
    $$\dim_v(R[\tfrac{1}{f}]) + 1 = \sup_{\mathfrak{p} \subseteq R[\tfrac{1}{f}]} \dim_v(R[\tfrac{1}{f}]/\mathfrak{p}) + 1 \leqslant \sup_{\mathfrak{p} \subseteq R[\tfrac{1}{f}]} \dim_v(R/\mathfrak{p}) \leqslant \sup_{\mathfrak{p} \subseteq R} \dim_v(R/\mathfrak{p}) = \dim_v(R).$$
\end{proof}

\begin{theorem}\label{theoremClausenMathew}
    Let $S$ be a qcqs scheme, and $n \geqslant 0$ be an integer. Then for every perverse $n$-connective space \hbox{$E \in \mathcal{P}_{\mathrm{Nis}}(\mathrm{Sm}_S)$} and every essentially smooth $S$-scheme $X$ of valuative dimension $d$, the anima $E(X)$ is $(n-d)$\nobreakdash-con\-nective.
\end{theorem}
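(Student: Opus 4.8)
The plan is to upgrade the defining property of perverse $n$-connectivity -- which only concerns the henselian local rings $\mathcal O_{X,y}^h$, i.e.\ the stalks of the small Nisnevich site of $X$ -- to a statement about all of $X$, by combining Nisnevich hyperdescent with a weighted refinement of the Clausen--Mathew bound on the Nisnevich cohomological dimension. Concretely, I would restrict $E$ to the small Nisnevich site of $X$, write $\mathcal E$ for the resulting sheaf so that $E(X) = \Gamma(X, \mathcal E)$, and use that since $\dim_v X = d < \infty$ the Nisnevich $\infty$-topos of $X$ has homotopy dimension $\le d$ by \cite{clausen_hyperdescent_2021}, hence is hypercomplete; thus $\mathcal E \simeq \varprojlim_k \tau_{\le k}\mathcal E$ and $E(X) \simeq \varprojlim_k \Gamma(X, \tau_{\le k}\mathcal E)$, a tower whose successive fibres are global sections of the Eilenberg--MacLane sheaves built from the Nisnevich homotopy sheaves $\underline\pi_k E$ (with the customary care for $\pi_0$ and $\pi_1$, running the argument componentwise if needed). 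I would then compute these homotopy sheaves stalkwise: since $E$ is finitary, the stalk of $\underline\pi_k E$ at $y \in X$ is $\pi_k E(\mathcal O_{X,y}^h)$, and $\spec\mathcal O_{X,y}^h$ is the henselisation of the local essentially smooth $S$-scheme $\spec\mathcal O_{X,y}$, of valuative dimension $\delta(y) := \dim_v\mathcal O_{X,y}$; so perverse $n$-connectivity of $E$ forces $E(\mathcal O_{X,y}^h)$ to be $(n-\delta(y))$-connective, and hence $\underline\pi_k E$ to have vanishing stalk at every $y$ with $\delta(y) < n-k$.

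The heart of the matter -- the actual ``variant of Clausen--Mathew'' -- is the following weighted vanishing: if $\mathcal F$ is a Nisnevich sheaf of abelian groups on $X$ with $\mathcal F_y = 0$ whenever $\delta(y) < c$, then $H^s_{\mathrm{Nis}}(X, \mathcal F) = 0$ for $s > d-c$. I would prove this by rerunning Clausen--Mathew's proof that the Nisnevich cohomological dimension of $X$ is at most $\dim_v X$ while carrying the weight: via a coniveau spectral sequence on the small Nisnevich site, the contribution of a point $y$ is cohomology with support at the closed point of $\spec\mathcal O_{X,y}^h$, which sits in degrees $\le \delta(y)$ because Nisnevich cohomology of a henselian local scheme is concentrated in degree $0$ and because Lemma~\ref{lemmaCMwithvaldim} bounds the valuative dimension of the punctured henselian local scheme by $\delta(y)-1$; the weight hypothesis discards the points with $\delta(y) < c$, and together with the valuative analogue of the inequality $\dim_v\overline{\{y\}} + \delta(y) \le d$ this yields the claim. (Loosely: $\mathcal F$ lives on a closed subscheme of valuative codimension $\ge c$, whose Nisnevich cohomological dimension is $\le d-c$.)

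Finally I would assemble the pieces: applying the weighted vanishing to $\mathcal F = \underline\pi_k E$ with $c = \max(n-k,0)$ gives $H^s_{\mathrm{Nis}}(X, \underline\pi_k E) = 0$ for $s > k-(n-d)$, so the layer $\Gamma\bigl(X, K(\underline\pi_k E, k)\bigr)$, whose homotopy groups are the $H^{k-\ast}_{\mathrm{Nis}}(X, \underline\pi_k E)$, is $(n-d)$-connective, and $\Gamma(X, \tau_{\le 0}\mathcal E)$ is too (contractible when $n > d$, since then $\underline\pi_0 E = 0$, and nonempty otherwise). Since the weighted vanishing also kills the next cohomology group, the fibres of $\Gamma(X, \tau_{\le k}\mathcal E) \to \Gamma(X, \tau_{\le k-1}\mathcal E)$ have trivial $\pi_{n-d-1}$, so $\{\pi_{n-d}\Gamma(X, \tau_{\le k}\mathcal E)\}_k$ is a tower of surjections with vanishing $\varprojlim^1$, and the Milnor sequence gives that $E(X) = \varprojlim_k \Gamma(X, \tau_{\le k}\mathcal E)$ is $(n-d)$-connective. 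I expect the weighted vanishing to be the main obstacle: the coniveau spectral sequence and the inequality $\dim_v\overline{\{y\}} + \delta(y) \le d$ are subtle over a non-Noetherian, possibly non-catenary base, which is precisely why one works with the valuative dimension and its stability properties from \cite{elmanto_cdh_2021} rather than the Krull dimension, and why Lemma~\ref{lemmaCMwithvaldim} enters as the crucial local input.
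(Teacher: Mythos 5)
Your strategy is sound and shares its key inputs with the paper's, but the route is genuinely different in one respect and has one real soft spot. The paper's proof is a direct adaptation of Clausen--Mathew's unstable induction: it invokes the proofs of their Theorems~3.30 and~3.14 with Krull dimension replaced by valuative dimension, using Lemma~\ref{lemmaCMwithvaldim} to implement the induction step (first for $X$ local, then reducing a general $X$ to its local rings $X_x$, each of valuative dimension $\dim_v(X_x)$). That argument is phrased entirely in terms of weighted \emph{homotopy} dimension and never passes through Postnikov towers or abelian sheaf cohomology. You instead rebuild the statement from Postnikov completeness plus a weighted \emph{cohomological} dimension bound and a Milnor sequence. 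Both approaches rest on the same pillars --- the stalkwise reading of perverse connectivity, Lemma~\ref{lemmaCMwithvaldim}, and a coniveau-type induction on dimension --- so your ``weighted vanishing'' is essentially the abelianisation of what the paper proves; what the homotopy-dimension formulation buys is that one never has to disassemble $E$ into its homotopy sheaves.

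The gap is exactly there: your weighted vanishing is stated for sheaves of abelian groups, but reassembling an \emph{unstable} connectivity statement requires it for the non-abelian layers as well. Concretely, in the boundary case $n = d+1$ the contribution of the $k=1$ layer to $\pi_0 E(X)$ is the non-abelian $H^1_{\mathrm{Nis}}(X, \pi_1 E)$ for a sheaf of groups $\pi_1 E$ known to vanish only at points of valuative codimension $<d$; the abelian coniveau spectral sequence says nothing about this set, and ``running the argument componentwise'' only addresses $\pi_0$ being a set, not $\pi_1$ being non-abelian. (For $n \ge d+2$ the sheaf $\pi_1 E$ vanishes outright and the issue disappears.) This is precisely the case the homotopy-dimension formulation is designed to handle uniformly, and it is fixable --- e.g.\ by proving your weighted bound directly as a homotopy-dimension statement, which is what the paper does --- but as written your assembly step does not cover it. Secondarily, the inequality $\dim_v \overline{\{y\}} + \dim_v(\mathcal{O}_{X,y}) \le d$ on which your coniveau bookkeeping relies does hold for the valuative dimension but needs justification (it is the valuative analogue of the inequality the paper invokes to match the statement of Clausen--Mathew's Theorem~3.30 in the Krull case); you rightly flag it, though note that the paper's reduction to local rings sidesteps it by only ever using $\dim_v(X_x)$.
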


\begin{proof}
    First note that, replacing valuative dimension by Krull dimension everywhere, this statement is a result of Clausen--Mathew (\cite[Theorem~3.30]{clausen_hyperdescent_2021}, where we use that $\dim(\overline{\{x\}}) \le d - \dim(X_x)$ for every point $x \in X$). We adapt the argument of \cite[Theorem~3.30]{clausen_hyperdescent_2021}, and prove the desired statement by induction on the valuative dimension of $X$.

    If the scheme $X$ is local, then the argument of \cite[Theorem~3.30]{clausen_hyperdescent_2021} proves the desired claim, by using Lemma~\ref{lemmaCMwithvaldim} to implement the induction step.

    In general, the underlying topological space of the qcqs scheme $X$ is a spectral space (\stacks{094L}), which is of Krull dimension at most $d$ by \cite[Proposition~2.3.2\,(8)]{elmanto_cdh_2021}. The proof of \cite[Theorem~3.14]{clausen_hyperdescent_2021} then implies, using the same argument as in Lemma~\ref{lemmaCMwithvaldim} for the induction step, that the anima $E(X)$ is $(n-d)$-connective if, for every point $x \in X$, the anima $E(X_x)$ is $(n-\dim_v(X_x))$-connective. The desired result is then a consequence of the previous paragraph.
\end{proof}

Following \cite[Section~2.3]{morel_homotopy_1999} (see also \cite[Definition~4.23]{elden_antieau_primer}), given a scheme $S$, we denote by
$$\text{Sing}^{\mathbb{A}^1} : \mathcal{P}(\text{Sm}_S) \longrightarrow \mathcal{P}(\text{Sm}_S)$$
the singular construction, which takes a presheaf of anima $E$ to the presheaf of anima
$$\text{Sing}^{\mathbb{A}^1}(E) := \vert E(- \times \Delta^\bullet)\vert.$$

\begin{proposition}\label{propositioninifinitecomposition}
    For any qcqs scheme $S$, the localisation $L_{\emph{mot}} : \mathcal{P}_{\emph{Nis}}(\emph{Sm}_S) \rightarrow \H(S)$ is equivalent to the countable iteration $(L_{\emph{Nis}}\, \emph{Sing}^{\A1})^{\circ \N}$.
\end{proposition}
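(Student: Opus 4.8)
The plan is to show that the endofunctor $L_{\mathrm{Nis}}\,\mathrm{Sing}^{\A1}$ of $\mathcal{P}_{\mathrm{Nis}}(\mathrm{Sm}_S)_\ast$ stabilises after countably many iterations onto the subcategory $\H(S)_\ast$, and that its colimit over $\N$ computes $L_{\mathrm{mot}}$. The key point is that $L_{\mathrm{mot}}$ is the localisation at the class of $\A1$-homotopy equivalences inside the presentable $\infty$-category $\mathcal{P}_{\mathrm{Nis}}(\mathrm{Sm}_S)_\ast$, and that a Nisnevich sheaf $F$ lies in $\H(S)_\ast$ exactly when it is $\A1$-invariant. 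So it suffices to produce, for every pointed space $E$, a map $E \to F$ with $F \in \H(S)_\ast$ which is an $\A1$-equivalence, and to check that the candidate $(L_{\mathrm{Nis}}\,\mathrm{Sing}^{\A1})^{\circ\N}(E) := \colim_k (L_{\mathrm{Nis}}\,\mathrm{Sing}^{\A1})^{\circ k}(E)$ does the job.

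First I would record the two standard facts. (i) For any pointed presheaf $E$, the unit $E \to \mathrm{Sing}^{\A1}(E)$ is an $\A1$-equivalence: this is the usual simplicial-homotopy argument, since $\mathrm{Sing}^{\A1}(E)$ is obtained as the realisation of $E(-\times\Delta^\bullet)$ and the two face maps $\A1 \rightrightarrows \A1\times\A1$ (or the relevant cosimplicial identities) become homotopic after realisation; moreover $\mathrm{Sing}^{\A1}$ is the coreflection of $\A1$-invariant presheaves among all presheaves. (ii) Nisnevich sheafification $L_{\mathrm{Nis}}$ is a left exact localisation and sends $\A1$-equivalences to $\A1$-equivalences (it is accessible and preserves colimits up to sheafification, and $\A1$-equivalences are generated under colimits by the maps $X\times\A1 \to X$, which $L_{\mathrm{Nis}}$ preserves). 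Since $S$ is qcqs, the Nisnevich $\infty$-topos on $\mathrm{Sm}_S$ has enough points and is hypercomplete-of-bounded-cohomological-dimension in the relevant finitary sense, so $L_{\mathrm{Nis}}$ is finitary; combined with the fact that $\mathrm{Sing}^{\A1}$ is a colimit over $\Delta^{\mathrm{op}}$ of finitary functors, the composite $L_{\mathrm{Nis}}\,\mathrm{Sing}^{\A1}$ is finitary, hence commutes with the sequential colimit defining its own iteration.

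Next I would argue that $F := \colim_k (L_{\mathrm{Nis}}\,\mathrm{Sing}^{\A1})^{\circ k}(E)$ is a fixed point: applying the (finitary, colimit-preserving) functor $L_{\mathrm{Nis}}\,\mathrm{Sing}^{\A1}$ to the defining colimit gives $L_{\mathrm{Nis}}\,\mathrm{Sing}^{\A1}(F) \simeq \colim_k (L_{\mathrm{Nis}}\,\mathrm{Sing}^{\A1})^{\circ(k+1)}(E) \simeq F$. In particular $F$ is a Nisnevich sheaf (it is in the essential image of $L_{\mathrm{Nis}}$, which is closed under filtered colimits by the finitariness just discussed) and $F$ is $\A1$-invariant: $\mathrm{Sing}^{\A1}(F) \to F$ is an equivalence on Nisnevich stalks because $L_{\mathrm{Nis}} \mathrm{Sing}^{\A1}(F)\simeq F$ and $F$ is already a sheaf, and an $\A1$-invariant Nisnevich sheaf is precisely an object of $\H(S)_\ast$ by definition. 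The composite $E \to \mathrm{Sing}^{\A1}(E) \to L_{\mathrm{Nis}}\mathrm{Sing}^{\A1}(E) \to \cdots \to F$ is a transfinite composite of $\A1$-equivalences by (i) and (ii), hence an $\A1$-equivalence since $\A1$-equivalences are closed under sequential colimits. Therefore the natural transformation $\mathrm{id} \Rightarrow (L_{\mathrm{Nis}}\,\mathrm{Sing}^{\A1})^{\circ\N}$ exhibits the target as a localisation onto $\H(S)_\ast$ at the $\A1$-equivalences, which is by definition $L_{\mathrm{mot}}$; by uniqueness of localisations the two functors agree.

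The main obstacle is the finitariness/convergence bookkeeping in the middle paragraph: one must make sure that $L_{\mathrm{Nis}}$ genuinely commutes with the relevant sequential colimit on $\mathrm{Sm}_S$ for $S$ merely qcqs (not noetherian), so that the colimit $F$ is actually a Nisnevich sheaf and the iteration is a true fixed point rather than just a ``fixed point up to sheafification.'' This is exactly where one invokes that $\mathrm{Sm}_S$ with the Nisnevich topology has a conservative finitary family of points (the henselian local ind-smooth $S$-schemes used throughout the paper) and that finitary localisations commute with filtered colimits — the same package underlying the finitariness hypothesis imposed on all pointed spaces in the conventions. Everything else is formal manipulation of localisations of presentable $\infty$-categories.
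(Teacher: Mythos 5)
Your argument is correct and is essentially the standard proof: the paper itself gives no argument for this proposition but defers to the references (Morel's Proposition~5.20 and Antieau--Elmanto's Theorem~4.27), whose proofs follow exactly your fixed-point/transfinite-composite strategy. One minor imprecision worth fixing: the reason $L_{\mathrm{Nis}}$ commutes with filtered colimits for qcqs $S$ is that the Nisnevich topology is then finitary --- sheaves are detected by excision for elementary distinguished squares (a finite-limit condition), so a filtered colimit of sheaves computed in presheaves is already a sheaf --- rather than anything to do with hypercompleteness or cohomological dimension, and $\mathrm{Sing}^{\mathbb{A}^1}$ is a reflection (not a coreflection) onto $\mathbb{A}^1$-invariant presheaves.
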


\begin{proof}
    A classical reference, although stated only in the case where $S$ is the spectrum of a field, is \cite[Proposition~$5.20$]{morel_a1_topology_field}. The more general case where $S$ is an arbitrary qcqs scheme is \cite[Theorem~4.27]{elden_antieau_primer}.
\end{proof}

\begin{corollary}\label{corollaryLmotpreserveperverseconnectivity}
    Let $S$ be a qcqs scheme. Then for every integer $n \geqslant 0$, the localisation $L_{\emph{mot}} : \mathcal{P}_{\emph{Nis}}(\emph{Sm}_S) \rightarrow \H(S)$ preserves perverse $n$-connective objects.
\end{corollary}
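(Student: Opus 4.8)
The plan is to exploit the description of $L_{\mot}$ as the countable iteration $(L_{\mathrm{Nis}}\,\mathrm{Sing}^{\A1})^{\circ\N}$ provided by Proposition~\ref{propositioninifinitecomposition}, and to check that perverse $n$-connectivity is preserved by each of the three operations involved: Nisnevich sheafification, the singular construction $\mathrm{Sing}^{\A1}$, and the formation of the filtered colimit over $\N$. Since Theorem~\ref{theoremClausenMathew} requires the base to be affine, I would first reduce to that case. This is harmless: a henselisation $X^h$ of a local essentially smooth $S$-scheme $X$ factors through any affine open $U\subseteq S$ containing the image of the closed point of $X$ (as $X$ is local, $X\times_S U = X$); then $X$ is a local essentially smooth $U$-scheme of the same valuative dimension, and both $L_{\mot}$ and the condition ``perverse $n$-connective'' are compatible with restriction along $\mathrm{Sm}_U\hookrightarrow\mathrm{Sm}_S$ --- for $L_{\mot}$ because $L_{\mathrm{Nis}}$, $\mathrm{Sing}^{\A1}$ and filtered colimits each are. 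So from now on $S$ is affine.

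Two of the three operations are immediate. That $L_{\mathrm{Nis}}$ preserves perverse $n$-connectivity is the content of Remark~\ref{remarkperverseconnectivityNisnevichsheafification}. For the colimit: all the pointed spaces that occur are finitary, so evaluation at a (henselian) local ind-smooth $S$-scheme commutes with filtered colimits (a filtered colimit of finitary Nisnevich sheaves being computed pointwise); since a filtered colimit of $(n-d)$-connective pointed anima is again $(n-d)$-connective, a filtered colimit of perverse $n$-connective pointed spaces is perverse $n$-connective.

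The substance is the $\mathrm{Sing}^{\A1}$ step. Let $E$ be a perverse $n$-connective pointed space over the affine scheme $S$, and let $X^h$ be the henselisation of a local essentially smooth $S$-scheme of valuative dimension $d$. By definition, $\mathrm{Sing}^{\A1}(E)(X^h)$ is the geometric realisation of the simplicial pointed anima $[m]\mapsto E(\mathbb{A}^m_{X^h})$. Here $\mathbb{A}^m_{X^h}$ is an essentially smooth $S$-scheme of valuative dimension at most $d+m$ --- combine the identity $\dim_v(\mathbb{A}^1_{(-)})=\dim_v(-)+1$ with the fact that passing to a henselisation does not increase the valuative dimension --- so Theorem~\ref{theoremClausenMathew} shows that $E(\mathbb{A}^m_{X^h})$ is $(n-d-m)$-connective. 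It then suffices to invoke the standard estimate that a simplicial pointed anima $Y_\bullet$ with $Y_m$ being $(c-m)$-connective for all $m\geq0$ has $c$-connective realisation $|Y_\bullet|$: in the skeletal filtration, $\mathrm{sk}_0|Y_\bullet| = Y_0$ is $c$-connective, and the cofibre of $\mathrm{sk}_m|Y_\bullet|\to\mathrm{sk}_{m+1}|Y_\bullet|$ is the $(m+1)$-fold suspension of the latching quotient $Y_{m+1}/L_{m+1}Y$, which is $(c-m-1)$-connective because the latching object $L_{m+1}Y$ is assembled from the more connective terms $Y_0,\dots,Y_m$; hence each associated graded piece is $c$-connective, and therefore so is each skeleton and finally the filtered colimit $|Y_\bullet|$. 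Taking $c=n-d$ shows that $\mathrm{Sing}^{\A1}(E)(X^h)$ is $(n-d)$-connective, i.e.\ that $\mathrm{Sing}^{\A1}(E)$ is perverse $n$-connective.

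Putting the pieces together, a straightforward induction on $k$ shows that each iterate $(L_{\mathrm{Nis}}\,\mathrm{Sing}^{\A1})^{\circ k}(E)$ is perverse $n$-connective: the base case $k=0$ is the hypothesis on $E$, and the inductive step applies the $\mathrm{Sing}^{\A1}$ estimate above to the perverse $n$-connective pointed space $(L_{\mathrm{Nis}}\,\mathrm{Sing}^{\A1})^{\circ k}(E)$ and then Remark~\ref{remarkperverseconnectivityNisnevichsheafification}. Passing to the colimit then gives that $L_{\mot}E=\colim_k\,(L_{\mathrm{Nis}}\,\mathrm{Sing}^{\A1})^{\circ k}(E)$ is perverse $n$-connective. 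I expect the $\mathrm{Sing}^{\A1}$ step to be the main obstacle: the bound on $E(\mathbb{A}^m_{X^h})$ coming from Theorem~\ref{theoremClausenMathew} decays linearly in $m$, so the uniform connectivity bound $(n-d)$ for the realisation can only be recovered by using the simplicial structure --- concretely, the fact that in the skeletal filtration the $m$-th layer is re-shifted upwards by $m$. A secondary point requiring care is the valuative-dimension bookkeeping ensuring $\dim_v(\mathbb{A}^m_{X^h})\leq d+m$, where the use of the valuative rather than the Krull dimension is essential, exactly as in Lemma~\ref{lemmaCMwithvaldim}.
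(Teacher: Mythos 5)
Your proposal is correct and follows essentially the same route as the paper: decompose $L_{\mot}$ via Proposition~\ref{propositioninifinitecomposition}, handle $L_{\mathrm{Nis}}$ by Remark~\ref{remarkperverseconnectivityNisnevichsheafification}, and handle $\mathrm{Sing}^{\A1}$ by combining the valuative-dimension count for $X\times\Delta^m$ with Theorem~\ref{theoremClausenMathew} and the standard connectivity estimate for geometric realisations (you use the skeletal filtration where the paper invokes the Bousfield--Kan spectral sequence, which is the same fact). Your explicit reduction to affine $S$ and the finitary/filtered-colimit bookkeeping are points the paper leaves implicit, but they do not change the argument.
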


\begin{proof}
    By Proposition~\ref{propositioninifinitecomposition}, it suffices to prove that both the Nisnevich sheafification $L_{\text{Nis}}$ and the singular construction $\text{Sing}^{\mathbb{A}^1}$ preserve perverse $n$-connective objects. For the Nisnevich sheafification, this is Remark~\ref{remarkperverseconnectivityNisnevichsheafification}. For the singular construction, let $E \in \mathcal{P}(\text{Sm}_S)$ be a presheaf of anima, and $X$ be a henselian local essentially smooth $S$\nobreakdash-scheme of valuative dimension $d$. 
    By \cite[Proposition~2.3.2\,(7)]{elmanto_cdh_2021}, for every integer $m \geqslant 0$, the scheme $X \times \Delta^m$ is of valuative dimension $d+m$. By Theorem~\ref{theoremClausenMathew}, this implies that, for every integer $m \geqslant 0$, the anima $E(X \times \Delta^m)$ is $(n-d-m)$-connective. By \cite[Lemma~2.4.6]{ayoub_A1-algebraic_2024} (see also \cite[Page~110, Footnote~41]{bachmann_A^1-invariant_2025} for a more direct argument in the stable setting), this in turn implies that the geometric realisation $(\text{Sing}^{\mathbb{A}^1} E)(X) := \vert E(X \times \Delta^\bullet)\vert$ is $(n-d)$-connective.
\end{proof}

\section{Shifted unstable $\mathbb{A}^1$-connectivity}\label{sectionunstabletheorem}

In this section, we use Corollary~\ref{corollaryLmotpreserveperverseconnectivity} and a new Gersten injectivity result (Proposition~\ref{propositiongersteninjectivity}) to prove our main result (Theorem~\ref{theoremunstableconnectivity}), and review some expected consequences (Corollaries~\ref{corollaryshiftedstable} and~\ref{corollaryconvergenceslice}).

\needspace{2\baselineskip} 

\begin{definition}[Connectivity]\label{definitionconnectivity}
    Let $S$ be a scheme, and $n$ be an integer. 
    \begin{enumerate}
        \item For every integer $i \geqslant 0$, the $i^{th}$ \emph{homotopy sheaf} $\pi_i E$ of a pointed space $E \in \mathcal{P}_{\text{Nis}}(\text{Sm}_S)_\ast$ is the Nisnevich sheaf of pointed sets associated with $U \mapsto \pi_i(E(U))$. 
        \item A pointed space $E \in \mathcal{P}_{\text{Nis}}(\text{Sm}_S)_\ast$ is \emph{$n$-connective} if its $i^{th}$ homotopy sheaf $\pi_i E$ vanishes for all integers $i < n$. By convention, any pointed space is $n$-connective if $n \le 0$. Similarly, a pointed motivic space $E \in \H(S)_\ast$ is \emph{$n$-connective} if its underlying pointed space $E \in \mathcal{P}_{\text{Nis}}(\text{Sm}_S)_\ast$ is $n$-connective.
        \item A space $E \in \mathcal{P}_{\text{Nis}}(\text{Sm}_S)$ is \emph{$n$-connective} if for every scheme $X \in \text{Sm}_S$ and every point $x \in E(X)$, the pointed space $(E,x) \in \mathcal{P}_{\text{Nis}}((\text{Sm}_S)_{/X})_\ast$ is $n$-connective. Similarly, a motivic space $E \in \text{H}(S)$ is \emph{$n$-connective} if its underlying space $E \in \mathcal{P}_{\text{Nis}}(\text{Sm}_S)$ is $n$-connective.
    \end{enumerate}
\end{definition}

\begin{remark}\label{remarkperverseconnectiveimpliesconnective}
    Let $S$ be a scheme, and $n \geqslant 0$ be an integer. By definition, if $E \in \mathcal{P}_{\text{Nis}}(\text{Sm}_S)$ is an $n$-connective space and $X$ is a henselian local essentially smooth $S$-scheme, then $E(X)$ is $n$-connective. In particular, any $n$-connective space is also perverse $n$-connective.
\end{remark}

The following notion was first introduced in \cite{colliot-thelene_bloch_1997}.\footnote{See also \cite{elmanto_morrow} for the original reference where this was called deflatability.}

\begin{definition}[Deflatability]\label{definitiondeflatablepresheaves}
    Given a scheme $S$, a functor $F \colon \text{Sch}^{\text{qcqs,op}}_S \rightarrow \text{Ani}$ is called \emph{deflatable} if for any qcqs $S$-scheme $X$, there exists a functorial equivalence between the maps
    $$\begin{tikzcd}
        F(\mathbb{P}^1_X) \arrow[shift right=1.25ex]{rr}[swap]{\pi_X^\ast \, \circ \, \infty_X^\ast}\arrow[rr, shift left=1.25ex, "j_X^{\ast}"]&& F(\mathbb{A}^1_X),
    \end{tikzcd}$$
    where the maps $\infty_X$, $j_X$, and $\pi_X$ are defined in the following commutative diagram of schemes.
    \begin{equation*}\label{diag:notations_gersten_injectivity}
    \begin{tikzcd}\mathbb{A}^1_{X}\arrow{dr}[swap]{\pi_{X}}\arrow[r,hook,"j_{X}"]&\mathbb{P}^1_{X}\arrow[d]&X\arrow[hook']{l}[swap]{\infty_{X}}\arrow[ld,-,double equal sign distance,double]\\ &X\end{tikzcd}
    \end{equation*}
    Note that this definition also makes sense for functors taking values in pointed anima.
\end{definition}

\begin{example}\label{exampledeflatable}
    If a functor $F \colon \text{Sch}^{\text{qcqs,op}}_S \rightarrow \text{Ani}$ is $\mathbb{A}^1$-invariant, then $F$ is deflatable in the sense of Definition~\ref{definitiondeflatablepresheaves} (see for instance \cite[Remark~3.9]{bouis_beilinson_2025}). Note that any family of functors satisfying the $\mathbb{P}^1$-bundle formula is also deflatable (\cite{colliot-thelene_bloch_1997}, see also \cite[Lemma~3.12]{bouis_beilinson_2025}).
\end{example}

The following lemma will be used in Propositions~\ref{propositiongersteninjectivity} and~\ref{propositionrelativedimensionzero}.

\begin{lemma}\label{lemmairreducible}
    Let $S$ be a scheme, $X$ be a henselian local essentially smooth $S$-scheme, and $s \in S$ be the image of the closed point of $X$. Then the fibre $X_s$ of $X$ over $s$ is irreducible.
\end{lemma}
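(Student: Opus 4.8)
The plan is to compute the fibre $X_{s}$ explicitly as the spectrum of a henselian local ring and to reduce the irreducibility to a standard fact about henselisations of regular local rings. Write $X = \spec A$ with $A = \mathcal{O}_{Y,y}^{h}$, the henselisation of the local ring at a point $y$ of a smooth $S$-scheme $Y$; then $s$ is the image of $y$ in $S$ (equivalently, of the closed point of $X$). Via the local homomorphism $\mathcal{O}_{S,s} \to \mathcal{O}_{Y,y} \to A$, the fibre $X_{s}$ is the spectrum of $A \otimes_{\mathcal{O}_{S,s}} \kappa(s) = A/\mathfrak{m}_{s}A$, where $\mathfrak{m}_{s} \subseteq \mathcal{O}_{S,s}$ is the maximal ideal.

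First I would identify this fibre ring. Since $\mathfrak{m}_{s}A$ is generated by the image of $\mathfrak{m}_{s}\mathcal{O}_{Y,y}$ and henselisation commutes with quotients, there is an isomorphism
$$A/\mathfrak{m}_{s}A \;\cong\; \bigl(\mathcal{O}_{Y,y}/\mathfrak{m}_{s}\mathcal{O}_{Y,y}\bigr)^{h} \;=\; \bigl(\mathcal{O}_{Y_{s},y}\bigr)^{h},$$
the henselisation of the local ring at $y$ of the fibre $Y_{s} := Y \times_{S} \spec\kappa(s)$. As $Y_{s}$ is smooth over the field $\kappa(s)$, the ring $\mathcal{O}_{Y_{s},y}$ is a Noetherian regular local ring, in particular a normal domain. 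It then remains to recall that the henselisation of a Noetherian normal (e.g.\ regular) local ring is again a Noetherian normal local ring, hence a domain; concretely, one writes $(\mathcal{O}_{Y_{s},y})^{h}$ as a filtered colimit of the local-étale $\mathcal{O}_{Y_{s},y}$-algebras occurring in its construction, each of which is Noetherian, local and normal (being étale over a normal ring), along faithfully flat — hence injective — transition maps, and a filtered colimit of integral domains is an integral domain. Therefore $X_{s} = \spec\,(\mathcal{O}_{Y_{s},y})^{h}$ is irreducible.

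The only point needing care, and the nearest thing to an obstacle, is the opening reduction: ensuring that the ``henselisation of a local essentially smooth $S$-scheme'' really has the form $\spec\,\mathcal{O}_{Y,y}^{h}$ with $Y \to S$ genuinely smooth, so that the fibre $Y_{s}$ is smooth over a field and $\mathcal{O}_{Y_{s},y}$ is Noetherian and regular, and that the base-change and quotient identities for henselisations are applied legitimately over the possibly non-Noetherian ring $\mathcal{O}_{S,s}$. Everything after that reduction is standard commutative algebra.
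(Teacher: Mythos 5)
Your proof is correct, and it is essentially the argument the paper outsources to the cited reference: identify $X_s$ with the henselisation of the local ring $\mathcal{O}_{Y_s,y}$ of the smooth $\kappa(s)$-scheme $Y_s$, which is a regular (hence normal) Noetherian local ring, and use that henselisation preserves normal local domains, so $X_s$ is the spectrum of a domain. The points you flag as delicate do go through: the identifications $X_s=\spec(A/\mathfrak{m}_sA)$ and $(R/I)^h=R^h/IR^h$ hold without noetherian hypotheses on $\mathcal{O}_{S,s}$, and $\mathcal{O}_{Y_s,y}$ is Noetherian regardless since $Y_s$ is locally of finite type over the field $\kappa(s)$.
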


\begin{proof}
    The proof is the same as in the first paragraph of the proof of \cite[Theorem~3.2]{bouis_beilinson_2025}, where we first reduce to $S$ being the spectrum of a henselian local ring instead of a henselian valuation ring.
\end{proof}

\begin{proposition}[Gersten injectivity over a base]\label{propositiongersteninjectivity}
    Let $S$ be a scheme, $X$ be a henselian local essentially smooth $S$-scheme, and $s \in S$ be the image of the closed point of $X$. Then for every finitary deflatable Nisnevich sheaf \hbox{$F : \emph{Sch}_S^{\emph{qcqs,op}} \rightarrow \emph{Ani}_\ast$} and every integer $i \geqslant 0$, we have that
    $$\mathrm{ker}\big(\pi_i (F(X)) \longrightarrow \pi_i (F(X_\eta))\big) = \{\ast\},$$
    where $\eta \in X$ is the generic point of the irreducible scheme $X_s$ (Lemma~\ref{lemmairreducible}), and $X_\eta$ is the local scheme of $X$ at $\eta$.\footnote{In practice, one often knows the vanishing of $\pi_i F$ at the henselisation $X_\eta^h$, which implies that $\pi_i F$ vanishes on $X_\eta$, hence also on $X$.} 
\end{proposition}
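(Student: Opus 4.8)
The plan is to reduce the statement to a known Gersten-type injectivity result over a base, exploiting deflatability to replace $\mathbb{P}^1$ by $\mathbb{A}^1$ and a geometric presentation over the local ring $X_\eta$. First I would set up the local situation: write $X = \mathrm{Spec}(A)$ with $A$ the henselisation of a local essentially smooth $S$-algebra, let $s \in S$ be the image of the closed point, and recall from Lemma~\ref{lemmairreducible} that the fibre $X_s$ is irreducible with generic point $\eta$; the scheme $X_\eta$ is the local ring of $X$ at $\eta$, which is a local essentially smooth scheme over $S$ whose closed point lies over $s$ with residue field equal to the function field of $X_s$. The class of a section $\alpha \in (\pi_i F)(X)$ lying in the kernel means that $\alpha$ becomes trivial after restriction to $X_\eta$; since $F$ is finitary and $X_\eta$ is a cofiltered limit of smooth affine $X$-schemes, the triviality of $\alpha|_{X_\eta}$ is already witnessed over some smooth affine $X$-scheme $U$ containing (the image of) $\eta$, i.e. $\alpha|_U$ is trivial for $U$ an open neighbourhood of $\eta$ in some smooth $X$-scheme.

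Next I would invoke the presentation lemma over a general base from \cite{bouis_beilinson_2025}, which in this henselian-local setting provides, for the complement $Z = X \setminus U$ (a closed subscheme avoiding $\eta$, hence of positive codimension in the fibre direction), a Nisnevich-local étale "presentation'' expressing a neighbourhood of the closed point of $X$ as an étale neighbourhood over $\mathbb{A}^1_{X'}$ for a smaller base $X'$, in such a way that $Z$ maps finite over $X'$. Concretely, one gets a morphism $X \to \mathbb{A}^1_{X'}$ that is étale along $Z$ and such that $Z$ is closed in $\mathbb{A}^1_{X'}$ and finite over $X'$, so that $Z$ sits inside a Nisnevich square over $\mathbb{A}^1_{X'}$ whose horizontal complement can be arranged to be $\mathbb{A}^1_{X'} \setminus (\text{a finite set of closed points over } X')$, equivalently an open of $\mathbb{P}^1_{X'}$ containing the section at infinity. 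This is the step where the hypotheses on $X$ (henselian local, essentially smooth) and the irreducibility of the fibre $X_s$ are essential, and it is where one must be careful that the presentation lemma is available without noetherian hypotheses — this is precisely the main obstacle, and it is exactly what is bypassed by citing \cite{bouis_beilinson_2025}.

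Then I would run the standard "compactify and deflate'' argument: Nisnevich descent for $F$ along the distinguished square produced above identifies $F(X) \to F(U)$ with a map built from $F(\mathbb{P}^1_{X'})$, $F(\mathbb{A}^1_{X'})$ and $F$ of the section at infinity $X' \hookrightarrow \mathbb{P}^1_{X'}$, and the key point is that $\alpha$, which becomes trivial on $U$, can be pushed to a class on $F(\mathbb{P}^1_{X'})$ that restricts trivially to $F(\mathbb{A}^1_{X'})$; deflatability of $F$ (Definition~\ref{definitiondeflatablepresheaves}) says the two maps $j_{X'}^\ast$ and $(\infty_{X'}\circ\pi_{X'})^\ast$ from $F(\mathbb{P}^1_{X'})$ to $F(\mathbb{A}^1_{X'})$ are functorially equivalent, so triviality along $\mathbb{A}^1_{X'}$ forces triviality along the section at infinity, hence triviality of $\alpha$ over the closed point of $X$. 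Since $X$ is henselian local and $F$ is a Nisnevich sheaf, a class in $\pi_i$ that is trivial at the closed point is trivial (one can check triviality of a morphism of pointed Nisnevich sheaves on henselian local rings), so $\alpha = \ast$.

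Finally I would remark that the same deflatability bookkeeping handles the based-point/$\pi_0$ subtleties: for $i = 0$ one argues with the pointed set $(\pi_0 F)(X)$ directly, using that "being in the kernel'' means lying in the same Nisnevich-local path component as the basepoint, and the above shows this component is already the basepoint component over the closed point; for $i \geq 1$ one works with the loop space $\Omega^i F$ (or the relevant homotopy sheaf) in place of $F$, noting that $\mathbb{A}^1$-invariance or the $\mathbb{P}^1$-bundle formula — hence deflatability — passes to iterated loop spaces, so the argument applies verbatim. The footnote's remark is then immediate: vanishing of $\pi_i F$ on the henselisation $X_\eta^h$ implies, since $X_\eta^h$ is a filtered limit and $F$ is finitary, vanishing on $X_\eta$, whence by the kernel statement vanishing on $X$.
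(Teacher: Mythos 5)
Your proposal follows exactly the route the paper takes: the paper's proof consists of citing the Nisnevich-local presentation lemma of \cite{bouis_beilinson_2025} and then running the classical Colliot-Th\'el\`ene--Hoobler--Kahn ``compactify and deflate'' argument (for which it points to \cite[Theorem~3.2]{bouis_beilinson_2025}); you have simply written out that classical argument. The overall structure --- finitariness to spread out the triviality of $\alpha|_{X_\eta}$ to an open $U \subseteq X$ containing $\eta$, the presentation $X \to \mathbb{A}^1_{X'}$ with $Z = X \setminus U$ finite over $X'$ and closed in $\mathbb{A}^1_{X'}$, excision to transport the $Z$-supported class to $\mathbb{P}^1_{X'}$, and deflatability to kill it --- is the right one.

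Two points in your final step are garbled, however. First, the deflatability implication runs the other way: what you actually know is that the lift $\beta \in F_Z(\mathbb{P}^1_{X'})$ restricts \emph{trivially along the section at infinity}, because $Z$ is contained in $\mathbb{A}^1_{X'}$ and hence misses $\infty(X')$; deflatability then gives $j_{X'}^{\ast}\beta \simeq \pi_{X'}^{\ast}\infty_{X'}^{\ast}\beta \simeq \ast$, i.e.\ triviality on $\mathbb{A}^1_{X'}$ is the \emph{conclusion}, not the hypothesis. Second, the principle you invoke at the end --- ``a class in $\pi_i$ of a Nisnevich sheaf over a henselian local scheme that is trivial at the closed point is trivial'' --- is false (consider $\mathbb{G}_m$ pointed at $1$ over a henselian local ring: the kernel of $R^{\times} \to k^{\times}$ is $1+\mathfrak{m}$); you are conflating checking a \emph{morphism of sheaves} on henselian local stalks with evaluating a single section at the closed fibre. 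Fortunately this step is not needed: pulling $j_{X'}^{\ast}\beta \simeq \ast$ back along the presentation map $X \to \mathbb{A}^1_{X'}$ already shows that $\alpha$ is trivial in $(\pi_i F)(X)$ itself. With those two corrections your argument coincides with the one the paper cites.
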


\begin{proof}
    The main geometric ingredient here is the Nisnevich-local presentation lemma \cite[Corollary~2.25]{bouis_beilinson_2025}. The proof that this implies the desired Gersten injectivity statement is classical and goes back to \cite{colliot-thelene_bloch_1997}; we reproduce the proof here for convenience, following \cite[Theorem~3.2]{bouis_beilinson_2025}. 

    Assume that the henselian local essentially smooth $S$-scheme $X$ is the henselian local scheme $X_x^h$ of a smooth $S$-scheme $X$ at some point $x \in X$. Let $[\sigma] \in \pi_i(F(X_x^h))$ be a class whose image in $\pi_i(F((X_x^h)_\eta))$ vanishes. We want to prove that $[\sigma]$ itself vanishes. The presheaf $\pi_i(F)$ commutes with filtered colimits of rings and the statement is Nisnevich local on $X$, so we can assume that the class $[\sigma]$ comes from a class $[\sigma] \in \pi_i(F(X))$ which vanishes in $\pi_i(F(X \setminus Z))$, for $Z \hookrightarrow X$ a closed immersion of $S$-schemes satisfying $\dim(Z_s) < \dim(X_s)$.

    It now suffices to produce a Nisnevich neighbourhood $x \in X' \rightarrow X$ such that the pullback of the class $[\sigma']$ vanishes in $\pi_i(F(X'))$. By the presentation lemma \cite[Corollary~2.25]{bouis_beilinson_2025}, there exist a smooth $S$-scheme~$T$, a Nisnevich neighbourhood $X' \rightarrow X$ of $x$, and a Nisnevich square
    $$\begin{tikzcd}
        X' \setminus Z' \arrow[r,hook] \ar[d] & X' \arrow[d,"\psi"] \\
        \mathbb{A}^1_T \setminus \psi(Z') \arrow[r,hook] & \mathbb{A}^1_T
    \end{tikzcd}$$
    such that the induced morphism $Z' \xrightarrow{\cong} \psi(Z') \rightarrow T$ is finite. By Nisnevich excision, the class $[\sigma'] \in \pi_i(F(X'))$ then lifts to a class $[\sigma'] \in \pi_i(F(\mathbb{A}^1_T))$ which vanishes in $\pi_i(F(\mathbb{A}^1_T \setminus \psi(Z')))$. Again by Nisnevich excision (where we use that $\psi(Z')$ is finite over $T$), this class $[\sigma'] \in \pi_i(F(\mathbb{A}^1_T))$ further lifts to a class $[\sigma'] \in \pi_i(F(\mathbb{P}^1_T))$ which vanishes in $\pi_i(F(\mathbb{P}^1_T \setminus \psi(Z')))$. By construction, the map $\infty^\ast_T : \pi_i(F(\mathbb{P}^1_T)) \rightarrow \pi_i(F(T))$ factors through the map $\pi_i(F(\mathbb{P}^1_T)) \rightarrow \pi_i(F(\mathbb{P}^1_T \setminus \psi(Z')))$, so the pullback class $\infty^\ast_T [\sigma'] \in \pi_i(F(T))$ vanishes. By deflatability (Definition~\ref{definitiondeflatablepresheaves}), this implies that $[\sigma'] = j^\ast_T [\sigma'] \in \pi_i(F(\mathbb{A}^1_T))$ also vanishes, which concludes the proof.
\end{proof}

\begin{lemma}\label{lemmaA1localisedvaluativedimension}
    Let $R$ be a local integral domain of valuative dimension $d$, and $\mathfrak{m}$ be the maximal ideal of $R$. Then the localisation $R':=R[X]_{\mathfrak{m} \cdot R[X]}$ is also of valuative dimension $d$.
\end{lemma}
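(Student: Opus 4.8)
The plan is to show $\dim_v(R') = d$ by bounding the valuative dimension from above and below. The lower bound is easy: the surjection $R[X] \twoheadrightarrow R$ (sending $X \mapsto 0$) exhibits $R$ as a quotient of $R[X]$ by a prime contained in $\mathfrak{m} \cdot R[X]$, so $R = R'/X R'$ is a quotient of $R'$ and hence $\dim_v(R') \ge \dim_v(R) = d$. Alternatively, since $R \hookrightarrow R'$ is a localisation of a polynomial ring, the generic fibre has valuative dimension $0$, and any chain of valuation rings dominating $R/\mathfrak{p}$ lifts to one dominating $R'/\mathfrak{p}R'$, which already gives $\dim_v(R') \ge d$.

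For the upper bound, I would use the key formula $\dim_v(\mathbb{A}^1_R) = \dim_v(R) + 1 = d+1$ (the defining property of valuative dimension recalled in the excerpt), together with the behaviour of $\dim_v$ under localisation: by \cite[Proposition~2.3.2\,(1)]{elmanto_cdh_2021}, passing to an open subscheme can only decrease the valuative dimension, so it suffices to estimate $\dim_v$ at the maximal ideal $\mathfrak{m} \cdot R[X]$ of $R[X]$. The point is that $\mathfrak{m} \cdot R[X]$ has height $\le d$ (it is the extension of $\mathfrak{m}$, and in a polynomial ring $\dim_v(R[X]_{\mathfrak{q}}) \le \dim_v(R) + 1$ with equality forcing $\mathfrak{q}$ to lie strictly above $\mathfrak{m}R[X]$); since $R'$ is obtained from $\mathbb{A}^1_R$ by localising away from the fibre direction, its valuative dimension is at most $(d+1) - 1 = d$. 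Concretely, I would argue that $\spec(R') \subseteq \mathbb{A}^1_R$ is contained in the closed subset $V(\text{generic fibre})$ removed — i.e. apply Lemma~\ref{lemmaCMwithvaldim} in spirit: localising a local ring of valuative dimension $d+1$ away from a divisor through its closed point drops the dimension by at least one.

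The cleanest route is probably: consider $\mathbb{A}^1_R = \spec(R[X])$, which has valuative dimension $d+1$, and its local ring $A := R[X]_{\mathfrak{m}R[X] + (X)}$ at the closed point of the zero-section, which therefore has $\dim_v(A) \le d+1$. Now $R' = A[\tfrac{1}{X}]$, so by Lemma~\ref{lemmaCMwithvaldim} applied to the local ring $A$ and the principal ideal $(X) \subseteq \mathfrak{m}_A$, we get $\dim_v(R') \le \dim_v(A) - 1 \le d$. Wait — this requires $\dim_v(A) \le d+1$ and that $A$ is local with $(X)$ inside its maximal ideal, both of which hold. Combined with the lower bound $\dim_v(R') \ge d$ above, this gives equality.

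The main obstacle I anticipate is verifying the lower bound rigorously, i.e. that localising the polynomial ring at $\mathfrak{m}R[X]$ does not collapse the valuative dimension below $d$ — one must produce, for a suitable prime $\mathfrak{p} \subseteq R$ realising $\dim_v(R/\mathfrak{p})$ close to $d$, a chain of valuation rings between $R'/\mathfrak{p}R'$ and its fraction field of length $d$. This follows from the fact that $\mathrm{Frac}(R'/\mathfrak{p}R') = \mathrm{Frac}(R/\mathfrak{p})(X)$ is a purely transcendental extension of $\mathrm{Frac}(R/\mathfrak{p})$, so any valuation ring $V$ with $R/\mathfrak{p} \subseteq V \subseteq \mathrm{Frac}(R/\mathfrak{p})$ extends (e.g. via the Gauss extension) to a valuation ring $V'$ of the same rank with $R'/\mathfrak{p}R' \subseteq V' \subseteq \mathrm{Frac}(R'/\mathfrak{p}R')$ — and one checks the extension genuinely lies over $R'$, not just $R$. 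A secondary subtlety is making sure the integral-domain hypothesis on $R$ is used only where needed (it guarantees $R[X]$ is a domain and that $\mathfrak{m}R[X]$ is prime of the expected height), so that the formula $\dim_v(\mathbb{A}^1_R) = d+1$ and Lemma~\ref{lemmaCMwithvaldim} apply cleanly.
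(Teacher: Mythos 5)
Your strategy for the upper bound --- deduce $\dim_v(R')\le d$ from the formula $\dim_v(\mathbb{A}^1_R)=\dim_v(R)+1$ together with Lemma~\ref{lemmaCMwithvaldim} --- is genuinely different from the paper's proof, which argues directly with valuations: given a valuation ring $V'$ with $R'\subseteq V'\subseteq F(X)$, it sets $V:=V'\cap F$, notes that $R\subseteq V\subseteq F$ forces $\mathrm{rank}(V)\le d$, and then shows that the condition $R'\subseteq V'$ forces $V'$ to be the Gauss extension of $V$, so the value groups (hence the ranks) of $V$ and $V'$ coincide. Your route is softer and reuses machinery already present in Section~2; the paper's is self-contained valuation theory. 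Note also that the paper only uses (and, strictly speaking, only proves) the inequality $\dim_v(R')\le d$, which is all that Proposition~\ref{propositionrelativedimensionzero} needs; the lower bound you spend effort on is not required there.

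That said, two of your identifications are false and need repair. First, $X\notin\mathfrak{m}\cdot R[X]$ (its degree-one coefficient is a unit), so $X$ is invertible in $R'$ and $R'/XR'=0$, not $R$; that version of the lower bound collapses. Your alternative via the Gauss extension of a valuation overring of $R$ is correct and is the right fix: a polynomial outside $\mathfrak{m}R[X]$ has a unit coefficient, hence Gauss value $0$, so the Gauss extension does contain $R'$. Second, $R'\ne A[\tfrac{1}{X}]$ for $A=R[X]_{\mathfrak{m}R[X]+(X)}$: for instance, with $R=k[t]_{(t)}$ the prime $(X-t)$ lies in $\spec(A[\tfrac{1}{X}])$ but not in $\spec(R')$. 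What is true, and suffices, is that every prime contained in $\mathfrak{m}R[X]$ is contained in $\mathfrak{q}:=\mathfrak{m}R[X]+(X)$ and avoids $X$, so $R'$ is a further localisation of $A[\tfrac{1}{X}]$, whence $\dim_v(R')\le\dim_v(A[\tfrac{1}{X}])\le\dim_v(A)-1\le(d+1)-1=d$ by Lemma~\ref{lemmaCMwithvaldim} and the fact that localisation does not increase valuative dimension. With these two corrections your argument goes through.
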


\begin{proof}
    Let $F$ be the fraction field of $R$, $F':=F(X)$ be the fraction field of $R'$, and let $V'$ be a valuation ring satisfying that $R' \subseteq V' \subseteq F'$. We want to prove that the rank of $V'$ is at most equal to $d$. Let $V$ be the valuation ring defined as the intersection of $V'$ and $F$. By construction, we know that $R \subseteq V \subseteq F$. In particular, by the assumption that $R$ is of valuative dimension $d$, this implies that the valuation ring $V$ is of rank at most $d$. The condition that $R' = R[X]_{\mathfrak{m}\cdot R[X]} \subseteq V'$ then implies that the valuation on $V'$ is the Gauss valuation induced by~$V$: $\vert \sum_i a_i X^i \vert_{V'} = \min_i \vert a_i \vert_V$ (see for instance~\ccite{phd-thesis}{proof of Lemma~3.10}). The value groups of $V$ and $V'$ are then naturally isomorphic, and in particular the rank of $V'$ is equal to that of $V$, which is at most equal to $d$.
\end{proof}

\begin{proposition}\label{propositionrelativedimensionzero}
    Let $S$ be a scheme of valuative dimension $d$, $X$ be a henselian local essentially smooth $S$-scheme, and $s \in S$ be the image of the closed point of $X$. Then the henselian local scheme $X_\eta^h$, where $\eta \in X$ is the generic point of the irreducible scheme $X_s$ (Lemma~\ref{lemmairreducible}), is of valuative dimension at most $d$.
\end{proposition}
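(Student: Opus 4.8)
The plan is to reduce to a localisation of an affine space over a local ring and then to iterate Lemma~\ref{lemmaA1localisedvaluativedimension}. Write $X = \spec(A^h)$ with $A = \mathcal{O}_{Y,y}$, where $Y$ is smooth over $S$ and $y$ is a point lying over $s$; since $y$ lies over $s$, the ring $A$ is naturally essentially smooth over $R := \mathcal{O}_{S,s}$, and $\dim_v(R) \leqslant \dim_v(S) = d$ by \cite[Proposition~2.3.2]{elmanto_cdh_2021}. Zariski-locally near $y$ the scheme $Y$ is étale over $\mathbb{A}^r_S$, where $r$ is the relative dimension at $y$; writing $z$ for the image of $y$, the map $\mathcal{O}_{\mathbb{A}^r_R,z} \to A$ is essentially étale, $\mathcal{O}_{\mathbb{A}^r_R,z} = R[X_1,\dots,X_r]_{\mathfrak{q}}$ for a prime $\mathfrak{q}$ with $\mathfrak{q}\cap R = \mathfrak{m}_R$, and localising $\mathcal{O}_{\mathbb{A}^r_R,z}$ at its prime $\mathfrak{m}_R\mathcal{O}_{\mathbb{A}^r_R,z}$ gives $D := R[X_1,\dots,X_r]_{\mathfrak{m}_R R[X_1,\dots,X_r]}$. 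Next I would identify $\eta$: the closed fibre $A \otimes_R \kappa(s) = \mathcal{O}_{Y_s,y}$ is a regular, hence normal, local ring, so in particular a domain, and therefore $A^h \otimes_R \kappa(s) = (A \otimes_R \kappa(s))^h$ is again a normal local domain. Thus $\mathfrak{m}_R A^h = \ker(A^h \to A^h \otimes_R \kappa(s))$ is a prime ideal, and it is exactly the prime corresponding to the generic point $\eta$ of $X_s = \spec(A^h \otimes_R \kappa(s))$ (consistently with Lemma~\ref{lemmairreducible}). Hence $\mathcal{O}_{X,\eta} = (A^h)_{\mathfrak{m}_R A^h}$; since $A^h$ is ind-étale over $A$ and $\mathfrak{m}_R A^h$ lies over $\mathfrak{m}_R A$, this is a filtered colimit of localisations of étale $A_{\mathfrak{m}_R A}$-algebras, while $A_{\mathfrak{m}_R A}$ is essentially étale over $D$. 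Finally $X_\eta^h = (\mathcal{O}_{X,\eta})^h$ is a filtered colimit of localisations of étale $\mathcal{O}_{X,\eta}$-algebras, so altogether $X_\eta^h$ is obtained from $D$ by a filtered colimit of local homomorphisms each of which is a localisation of an étale algebra.

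The key point is then that such colimits cannot increase the valuative dimension: if $D \to C$ is a filtered colimit of local homomorphisms $D \to C_\lambda$ with each $C_\lambda$ a localisation of an étale $D$-algebra, then $\dim_v(C) \leqslant \dim_v(D)$. Indeed, for a prime $\mathfrak{Q} \subseteq C$ with $\mathfrak{q} := \mathfrak{Q} \cap D$, the fibres of étale maps over $\kappa(\mathfrak{q})$ are finite, so each $\Frac(C_\lambda/(\mathfrak{Q}\cap C_\lambda))$ is finite over $\Frac(D/\mathfrak{q})$; passing to the colimit, $\Frac(C/\mathfrak{Q})$ is algebraic over $\Frac(D/\mathfrak{q})$. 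Hence any valuation ring $V$ with $C/\mathfrak{Q} \subseteq V \subseteq \Frac(C/\mathfrak{Q})$ restricts to a valuation ring $W := V \cap \Frac(D/\mathfrak{q})$ with $D/\mathfrak{q} \subseteq W$ and $\rank(V) = \rank(W)$, the rank of a valuation being unchanged under algebraic field extensions; so $\rank(V) = \rank(W) \leqslant \dim_v(D/\mathfrak{q}) \leqslant \dim_v(D)$, and taking suprema over $\mathfrak{Q}$ and $V$ gives the claim. Applying this with $C = X_\eta^h$ yields $\dim_v(X_\eta^h) \leqslant \dim_v(D)$.

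It then remains to bound $\dim_v(D)$. When $R$ is an integral domain, $\dim_v(D) = \dim_v(R)$ follows by applying Lemma~\ref{lemmaA1localisedvaluativedimension} $r$ times: at each stage $R_i := R[X_1,\dots,X_i]_{\mathfrak{m}_R R[X_1,\dots,X_i]}$ is a local integral domain whose maximal ideal is generated by $\mathfrak{m}_R$, with $\dim_v(R_i) = \dim_v(R)$, and $R_i[X_{i+1}]_{\mathfrak{m}_{R_i}R_i[X_{i+1}]} = R_{i+1}$. For general local $R$ one reduces to this case: the minimal primes of $D$ are the extensions of the $\mathfrak{p}\cdot R[X_1,\dots,X_r]$ for $\mathfrak{p} \in \operatorname{Min}(R)$, and the corresponding quotient of $D$ is $(R/\mathfrak{p})[X_1,\dots,X_r]_{\mathfrak{m}_{R/\mathfrak{p}}(R/\mathfrak{p})[X_1,\dots,X_r]}$, so $\dim_v(D) = \sup_{\mathfrak{p}\in\operatorname{Min}(R)}\dim_v(R/\mathfrak{p}) = \dim_v(R) \leqslant d$. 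Combining the three paragraphs gives $\dim_v(X_\eta^h) \leqslant \dim_v(D) = \dim_v(R) \leqslant d$.

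The main obstacle is the first paragraph: one must check carefully that henselisation, localisation at the generic point of the closed fibre, and the passage from $Y$ to $\mathbb{A}^r_S$ are all expressible as filtered colimits of localisations of étale algebras along local homomorphisms, and then combine this with the standard — but slightly delicate — fact that the rank of a valuation is invariant under algebraic field extensions.
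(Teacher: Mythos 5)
Your proof is correct and takes essentially the same route as the paper: reduce to the localisation of $\mathbb{A}^r$ over the local ring $\mathcal{O}_{S,s}$ at the generic point of its special fibre, absorb the (ind-)étale steps using the fact that they cannot increase valuative dimension, and iterate Lemma~\ref{lemmaA1localisedvaluativedimension}. The only real differences are matters of detail: where the paper cites the Stacks Project and \cite[Proposition~2.3.5]{elmanto_cdh_2021} for the unramified/étale step, you prove the needed ind-étale case directly via rank-invariance of valuations under algebraic residue-field extensions, and you make explicit both the identification of $\eta$ with $\mathfrak{m}_R A^h$ and the reduction to the domain case required to apply the lemma.
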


\begin{proof}
    The statement is Zariski-local around $s \in S$, so we can assume that the scheme $S$ is local with closed point~$s$ (\cite[Proposition~2.3.2\,(3)]{elmanto_cdh_2021}). By construction, there exists a smooth $S$-scheme $Y$ and a generic point $y \in Y_s$ such that $X_\eta^h$ is the henselisation of the local scheme of $Y$ at $y$. By \stacks{02G8} and \cite[Proposition~2.3.5]{elmanto_cdh_2021}, if $Y' \rightarrow Y$ is an unramified morphism of schemes, then $\dim_v(Y') \leqslant \dim_v(Y)$. In particular, it suffices to prove that the localisation of $Y$ at $y$ is of valuative dimension at most that of $S$. Zariski-locally around $y \in Y$, the map $Y \rightarrow S$ factors as a composite $Y \rightarrow \mathbb{A}^m_S \rightarrow S$ for some integer $m \geqslant 0$ and where the map $Y \rightarrow \mathbb{A}^m_S$ is étale. Using again \stacks{02G8} and \cite[Proposition~2.3.5]{elmanto_cdh_2021}, we can assume that $Y=\mathbb{A}^m_S$ and $y$ is the generic point of the special fibre of $Y$. 
    By induction on the integer $m\geqslant 1$, we can further assume that $m=1$. Finally, in this case, by definition of the valuative dimension, the desired claim is a consequence of Lemma~\ref{lemmaA1localisedvaluativedimension}. 
\end{proof}

\begin{theorem}[Shifted unstable $\A1$-connectivity theorem]\label{theoremunstableconnectivity}
    Let $S$ be a qcqs scheme of valuative dimension $d$, and \hbox{$n \geqslant 0$} be an integer. Then for any $n$-connective space $E \in \mathcal{P}_{\emph{Nis}}(\emph{Sm}_S)$, the motivic space \hbox{$L_{\emph{mot}}\,E \in \H(S)$} is \hbox{$(n-d)$}\nobreakdash-con\-nective. 
\end{theorem}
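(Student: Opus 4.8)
The plan is to run the chain of implications from the introduction's outline. Since $E$ is $n$-connective, it is perverse $n$-connective by Remark~\ref{remarkperverseconnectiveimpliesconnective}, and then Corollary~\ref{corollaryLmotpreserveperverseconnectivity} transports this property along the localisation, so that $L_{\mathrm{mot}}E$ is a perverse $n$-connective motivic space. It remains to upgrade this to genuine $(n-d)$-connectivity, that is, to the vanishing of the Nisnevich homotopy sheaves $\pi_i(L_{\mathrm{mot}}E)$ for $i<n-d$. A Nisnevich sheaf of pointed sets is trivial exactly when all its stalks are, and the stalks are computed on henselisations of local essentially smooth $S$-schemes; so it suffices to show that $\pi_i\big(L_{\mathrm{mot}}E(X^h)\big)=\{\ast\}$ for every such henselisation $X^h$ and every $i<n-d$.

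Fix $X^h$. Perverse $n$-connectivity only tells us that $L_{\mathrm{mot}}E(X^h)$ is $(n-\dim_v X^h)$-connective, and $\dim_v X^h$ may exceed $d$ (e.g.\ for local rings of $\mathbb{A}^m_S$), so one more input is required. Since $L_{\mathrm{mot}}E$ is a finitary $\mathbb{A}^1$-invariant Nisnevich sheaf, it is in particular deflatable (Example~\ref{exampledeflatable}), so Gersten injectivity applies: letting $s\in S$ be the image of the closed point of $X^h$ and $\eta$ the generic point of the irreducible special fibre $X^h_s$ (Lemma~\ref{lemmairreducible}), Proposition~\ref{propositiongersteninjectivity} --- in the form noted in its footnote, where it is enough to have vanishing at the henselian local scheme $X_\eta^h$ --- reduces the problem to showing that $\pi_i\big(L_{\mathrm{mot}}E(X_\eta^h)\big)=\{\ast\}$ for $i<n-d$. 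This is now immediate from the valuative-dimension estimate: Proposition~\ref{propositionrelativedimensionzero} gives $\dim_v X_\eta^h\le d$, whence perverse $n$-connectivity of $L_{\mathrm{mot}}E$ makes $L_{\mathrm{mot}}E(X_\eta^h)$ be $(n-\dim_v X_\eta^h)$-connective, hence $(n-d)$-connective.

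Almost all of the real content has been pushed into the preparatory results --- Corollary~\ref{corollaryLmotpreserveperverseconnectivity} (which rests on the Clausen--Mathew-type Theorem~\ref{theoremClausenMathew}), the Gersten injectivity Proposition~\ref{propositiongersteninjectivity} (which rests on the presentation lemma over a base), and the dimension bound Proposition~\ref{propositionrelativedimensionzero} --- so the theorem amounts to assembling them in the right order. The step I expect to demand the most care is the passage from bare Gersten injectivity to an honest vanishing statement: one must verify that $L_{\mathrm{mot}}E$ really satisfies the hypotheses of Proposition~\ref{propositiongersteninjectivity} and invoke the correct ``vanishing at the generic point of the special fibre suffices'' form, since it is precisely at those relative-dimension-zero points that the valuative dimension descends back to $\dim_v S$.
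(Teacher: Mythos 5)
Your proposal is correct and follows exactly the same route as the paper: perverse connectivity via Remark~\ref{remarkperverseconnectiveimpliesconnective} and Corollary~\ref{corollaryLmotpreserveperverseconnectivity}, deflatability via Example~\ref{exampledeflatable}, then Gersten injectivity (Proposition~\ref{propositiongersteninjectivity}, in the footnote form) combined with the dimension bound of Proposition~\ref{propositionrelativedimensionzero}. You in fact spell out the reduction to stalks at henselian local schemes slightly more explicitly than the paper does; no gaps.
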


\begin{proof}
    By Remark~\ref{remarkperverseconnectiveimpliesconnective}, the space $E \in \mathcal{P}_{\text{Nis}}(\text{Sm}_S)$ is perverse $n$-connective in the sense of Definition~\ref{definitionperverseconnectivity}. By Corollary~\ref{corollaryLmotpreserveperverseconnectivity}, this implies that the motivic space $L_{\text{mot}} E \in \text{H}(S)$ is perverse $n$-connective. In particular, 
    any henselian local essentially smooth $S$-scheme $X$ of valuative dimension $e$ satisfies that the anima $(L_{\text{mot}} E)(X)$ is $(n-e)$\nobreakdash-connective. By Example~\ref{exampledeflatable}, the $\mathbb{A}^1$-invariant Nisnevich sheaf $L_{\text{mot}} E$ is deflatable. By Proposition~\ref{propositiongersteninjectivity} (applied to the deflatable Nisnevich sheaf $L_{\text{mot}} E$) and Proposition~\ref{propositionrelativedimensionzero}, this implies that any henselian local essentially smooth $S$-scheme $X$ satisfies that the anima $(L_{\text{mot}}E)(X)$ is $(n-d)$-connective, {\it i.e.}, that the motivic space $L_{\text{mot}} E \in \H(S)$ is $(n-d)$\nobreakdash-connective.
\end{proof}

\begin{remark}\label{remark1connectivity}
    As a consequence of Proposition~\ref{propositioninifinitecomposition}, one can actually prove a stronger version of Theorem~\ref{theoremunstableconnectivity} when $n=1$. Namely, for any $1$-connective space $E \in \mathcal{P}_{\text{Nis}}(\text{Sm}_S)$, the motivic space $L_{\text{mot}}\,E \in \H(S)$ is $1$-connective (\cite[Corollary~$4.30$]{elden_antieau_primer}).
\end{remark}

\begin{corollary}
    Let $S$ be a noetherian scheme of Krull dimension $d$, and $n$ be an integer. Then for any $n$-connective space $E \in \mathcal{P}_{\emph{Nis}}(\emph{Sm}_S)$, the motivic space $L_{\emph{mot}} E \in \H(S)$ is $(n-d)$-connective.
\end{corollary}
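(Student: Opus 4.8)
The plan is to deduce this corollary from Theorem~\ref{theoremunstableconnectivity} by comparing Krull dimension and valuative dimension, together with a reduction to the case $n \geqslant 0$. First I would address the dimension comparison: on a noetherian scheme, the valuative dimension agrees with the Krull dimension, a fact recorded in \cite[Section~2.3]{elmanto_cdh_2021} (indeed it is already noted in the introduction of this paper that the two notions coincide on noetherian schemes). So for a noetherian scheme $S$ of Krull dimension $d$, we have $\dim_v(S) = d$, and Theorem~\ref{theoremunstableconnectivity} applies verbatim once $n \geqslant 0$.

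The remaining point is that the corollary allows $n$ to be an arbitrary integer, whereas Theorem~\ref{theoremunstableconnectivity} is stated only for $n \geqslant 0$. If $n < 0$, the statement is essentially vacuous: an $n$-connective pointed space for $n < 0$ is simply a pointed space (there is no condition, since homotopy sheaves $\pi_i$ are only indexed by $i \geqslant 0$), and being $(n-d)$-connective with $n - d < 0$ is likewise no condition at all. Concretely, I would argue that if $E$ is $n$-connective for some $n < 0$, then in particular $E$ is $0$-connective (the vanishing conditions on $\pi_i$ for $i < n$ are empty, and there are no further conditions for $n \leqslant 0$), so by Theorem~\ref{theoremunstableconnectivity} the motivic space $L_{\mathrm{mot}} E$ is $(0 - d) = (-d)$-connective; and since $n - d \leqslant -d$, being $(-d)$-connective implies being $(n-d)$-connective. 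Alternatively, and more cleanly, one observes that for $n \leqslant 0$ both hypothesis and conclusion hold trivially for every pointed space, so only $n \geqslant 1$ (a fortiori $n \geqslant 0$) carries content, and there Theorem~\ref{theoremunstableconnectivity} applies directly.

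I do not expect any genuine obstacle here; this is a bookkeeping corollary whose only subtlety is making sure the case $n < 0$ is handled and that the identification $\dim_v = \dim$ on noetherian schemes is invoked with a precise reference. The one place to be slightly careful is to confirm that a noetherian scheme need not be qcqs a priori in full generality — but noetherian schemes are automatically quasi-compact and quasi-separated, so the hypothesis of Theorem~\ref{theoremunstableconnectivity} is met. With these two observations in place, the proof is a single sentence: by \cite[Section~2.3]{elmanto_cdh_2021} the valuative dimension of $S$ equals its Krull dimension $d$, $S$ is qcqs, and the claim follows from Theorem~\ref{theoremunstableconnectivity} (trivially when $n < 0$).
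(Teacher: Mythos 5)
Your proposal is correct and follows essentially the same route as the paper: apply Theorem~\ref{theoremunstableconnectivity} together with the fact that valuative and Krull dimension agree on (locally) noetherian schemes (\cite[Proposition~2.3.2\,(9)]{elmanto_cdh_2021}). Your additional bookkeeping for $n<0$ (vacuity of the hypothesis and conclusion) and the observation that noetherian schemes are qcqs are both correct, though the paper leaves them implicit.
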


\begin{proof}
    This is a consequence of Theorem~\ref{theoremunstableconnectivity} and of the fact that the valuative and Krull dimensions agree on locally noetherian schemes (\cite[Proposition~2.3.2\,(9)]{elmanto_cdh_2021}). 
\end{proof}

The following result was previously known, using different techniques, when $S$ is locally noetherian \cite[Theorem~2.7]{druzhinin_a1_connectivity}.

\begin{corollary}[Shifted stable $\mathbb{A}^1$-connectivity theorem]\label{corollaryshiftedstable}
    Let $S$ be a qcqs scheme of valuative dimension $d$, and $n \geqslant 0$ be an integer. Then for any $n$-connective pointed space $E \in \mathcal{P}_{\emph{Nis}}(\emph{Sm}_S)_\ast$, the motivic spectrum $L_{\emph{mot}} E \in \emph{SH}(S)$ is $(n-d)$-connective. 
\end{corollary}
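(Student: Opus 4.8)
The plan is to deduce the stable statement directly from the unstable Theorem~\ref{theoremunstableconnectivity} by applying the infinite $\mathbb{P}^1$-suspension functor. Recall that the functor $\mathcal{P}_{\text{Nis}}(\text{Sm}_S)_\ast \to \text{SH}(S)$ appearing in the statement is by construction the composite of the motivic localisation $L_{\text{mot}} \colon \mathcal{P}_{\text{Nis}}(\text{Sm}_S)_\ast \to \H(S)_\ast$ with the suspension spectrum functor $\Sigma^\infty_{\mathbb{P}^1} \colon \H(S)_\ast \to \text{SH}(S)$, so that the motivic spectrum denoted $L_{\text{mot}}E$ is $\Sigma^\infty_{\mathbb{P}^1}(L_{\text{mot}}E)$ with $L_{\text{mot}}E \in \H(S)_\ast$. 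By Theorem~\ref{theoremunstableconnectivity}, this motivic space is $(n-d)$-connective, so the corollary follows once we know the general fact that $\Sigma^\infty_{\mathbb{P}^1}$ is right $t$-exact for the homotopy $t$-structures, i.e.\ that it sends $m$-connective motivic spaces to $m$-connective motivic spectra for every integer $m$.

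This last fact is classical. One concrete way to see it is to factor $\Sigma^\infty_{\mathbb{P}^1}$ as the $S^1$-suspension spectrum functor $\Sigma^\infty_{S^1} \colon \H(S)_\ast \to \text{SH}^{S^1}(S)$ followed by the $\mathbb{G}_m$-stabilisation $\text{SH}^{S^1}(S) \to \text{SH}(S)$, which is the filtered colimit along the endofunctor $-\wedge\mathbb{G}_m$. The functor $\Sigma^\infty_{S^1}$ preserves connectivity, since if $\pi_i F = 0$ for $i < m$ then the homotopy sheaves of $\Sigma^\infty_{S^1}F \simeq \colim_k \Omega^k_{S^1}\Sigma^k_{S^1}F$ vanish below degree $m$ as well (stable Hurewicz, together with the associated Milnor exact sequences). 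The endofunctor $-\wedge\mathbb{G}_m$ is right $t$-exact because $\Sigma^\infty_{S^1}\mathbb{G}_m$ is $0$-connective — the pointed motivic space $\mathbb{G}_m$ is trivially so — and smashing with a $0$-connective motivic spectrum preserves connectivity; moreover the homotopy $t$-structure on $\text{SH}(S)$ is compatible with filtered colimits, so passing to the colimit keeps $m$-connective objects $m$-connective. Composing these three observations shows that $\Sigma^\infty_{\mathbb{P}^1}$ preserves $m$-connective objects, which gives the corollary.

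I do not expect any genuine obstacle here: the substance of the corollary is entirely contained in the unstable bound of Theorem~\ref{theoremunstableconnectivity}, and the remaining step is the standard right $t$-exactness of $\Sigma^\infty_{\mathbb{P}^1}$. The only point deserving mild care is that the homotopy $t$-structure on $\text{SH}(S)$ must be available, and $\Sigma^\infty_{\mathbb{P}^1}$ right $t$-exact with respect to it, over a possibly non-noetherian qcqs base $S$; this holds by the usual construction of the homotopy $t$-structure as the one generated under colimits and extensions by the objects $\Sigma^\infty_+ X \wedge \mathbb{G}_m^{\wedge q}$ for $X \in \text{Sm}_S$ and $q \in \mathbb{Z}$, and in that formulation the right $t$-exactness of $\Sigma^\infty_{\mathbb{P}^1}$ is immediate, so the explicit factorisation above can be bypassed entirely. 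Since $d < \infty$ and $n \geqslant 0$, no convergence or boundedness issues arise, and the statement follows as above.
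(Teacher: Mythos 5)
Your proposal is correct and follows essentially the same route as the paper: the paper's proof simply combines Theorem~\ref{theoremunstableconnectivity} with the definition of $n$-connectivity for motivic spectra (citing Druzhinin's conventions), which amounts to exactly the right $t$-exactness of $\Sigma^\infty_{\mathbb{P}^1}$ that you spell out. Your extra detail on factoring through $S^1$-spectra and $\mathbb{G}_m$-stabilisation is a standard and accurate justification of the step the paper leaves implicit.
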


\begin{proof}
    This is a consequence of the definition of $n$-connectivity for a motivic spectrum (see for instance \cite[Section~2.1]{druzhinin_a1_connectivity}) and of the shifted unstable $\mathbb{A}^1$-connectivity theorem (Theorem~\ref{theoremunstableconnectivity}). Note that if one rather starts from a spectra-valued sheaf $E \in \mathcal{P}_{\text{Nis}}(\text{Sm}_S;\text{Sp})$, then the same result still holds: either as a consequence of Theorem~\ref{theoremunstableconnectivity} if $n \ge d$, or by using Proposition~\ref{propositiongersteninjectivity} to reproduce the classical argument of \cite{morel_introduction_A1_homotopy} (see also \cite{schmidt_strunk,deshmukh-hogadi-kulkarni-yadav_presentation_lemma_noetherian_domains}).
\end{proof}

\begin{remark}
    The analogue of Corollary~\ref{corollaryshiftedstable} for motivic $S^1$-spectra $\text{SH}^{S^1}(S)$, as also studied in \cite{schmidt_strunk,deshmukh-hogadi-kulkarni-yadav_presentation_lemma_noetherian_domains,druzhinin_a1_connectivity}, is similarly a consequence of Theorem~\ref{theoremunstableconnectivity}.
\end{remark}

\begin{remark}[Perverse homotopy $t$-structure]
    Let $S$ be a qcqs scheme of finite valuative dimension. The category $\text{SH}(S)$ can be equipped with a \emph{homotopy $t$-structure}, which is controlled in a nice way, when $S=\text{Spec}(k)$ is the spectrum of a field, by Morel's stable connectivity theorem (\cite[Section~5.2]{morel_introduction_A1_homotopy}, see also \cite[Section~2.1]{hoyois_quadratic_2014}). Over a more general base, the necessity for a \emph{shifted} stable $\mathbb{A}^1$-connectivity theorem (\cite{ayoub_counter-example_A1_connectivity}) prevents us from having a similar description of this homotopy $t$-structure in terms of a vanishing of homotopy sheaves. In this remark, we sketch how to define a perverse homotopy $t$-structure on $\text{SH}(S)$ using the ideas developed in this paper. Let $\prescript{p}{}{\text{SH}(S)}_{\ge 0} \subseteq \text{SH}(S)$ be the subcategory spanned by motivic spectra $E$ such that, for every henselian local essentially smooth $S$-scheme $X$, $E(X)$ is $(-\dim_v(S_s^h))$-connective where $s \in S$ is the image of the closed point $x \in X$. The category $\prescript{p}{}{\text{SH}(S)}_{\ge 0}$ is closed under homotopy colimits and extensions, hence it corresponds to the nonnegative part of a $t$-structure $(\prescript{p}{}{\text{SH}(S)}_{\ge 0},\prescript{p}{}{\text{SH}(S)}_{\le 0})$ on $\text{SH}(S)$ (\cite[Proposition~1.4.4.11]{lurie_higher_2017}), which we call the \emph{perverse homotopy $t$-structure} on $\text{SH}(S)$. By Gersten injectivity over a base (Proposition~\ref{propositiongersteninjectivity}), if $E \in \mathcal{P}_{\text{Nis}}(\text{Sm}_S)_\ast$ is $n$-connective (in the sense of Definition~\ref{definitionconnectivity}), then $L_{\text{mot}} E \in \prescript{p}{}{\text{SH}(S)}_{\ge n}$. This property is a generalisation of Morel's description of the homotopy $t$-structure on $\text{SH}(S)$ when $S=\text{Spec}(k)$ (\cite[Theorem~4.3.4]{morel_introduction_A1_homotopy}).
\end{remark}

We refer to \cite[Section~3.3]{bachmann_A^1-invariant_2025} for the notation relevant to the following result, which was first obtained by Bachmann--Elmanto--Morrow as a consequence of their main results on $\mathbb{A}^1$-invariant motivic cohomology (\cite[Corollary~9.5]{bachmann_A^1-invariant_2025}).

\begin{corollary}[Convergence of the slice filtration, \cite{bachmann_A^1-invariant_2025}]\label{corollaryconvergenceslice}
    Let $S$ be a qcqs scheme of valuative dimension $d$. Then the motivic spectrum $\mathrm{kgl}_S \in \emph{SH}(S)$ is very effective and, for every integer $n \geqslant 0$, the Nisnevich sheaf of spectra $\omega^\infty \emph{Fil}^n_{\emph{slice}} \emph{KGL}_S$ is $(n-d)$-connective. In particular, if $S$ is a qcqs scheme of finite valuative dimension, then the slice filtration $\emph{Fil}^\star_{\emph{slice}} \emph{KGL}_S$ is convergent.
\end{corollary}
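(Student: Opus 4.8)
The plan is to deduce the statement from the shifted stable $\mathbb{A}^1$-connectivity theorem (Corollary~\ref{corollaryshiftedstable}, together with its $\mathrm{SH}^{S^1}(S)$-analogue noted in the remark following it) and two external inputs: the very effectivity of connective algebraic $K$-theory and the elementary formalism of the slice filtration. First I would invoke Bachmann--Elmanto--Morrow to the effect that $\mathrm{kgl}_S = f_0\mathrm{KGL}_S$ is very effective for the qcqs scheme $S$ (this is classical over a field and due to Bachmann over Dedekind schemes); this gives the first assertion of the corollary. Combining it with the $\mathbb{P}^1$-periodicity $\mathrm{KGL}_S \simeq \Sigma^{2,1}\mathrm{KGL}_S$ and the standard identity $f_j\circ\Sigma^{2,1}\simeq\Sigma^{2,1}\circ f_{j-1}$ identifies the slice filtration as
\[
\mathrm{Fil}^j_{\mathrm{slice}}\mathrm{KGL}_S \;=\; f_j\mathrm{KGL}_S \;\simeq\; \Sigma^{2j,j}\,\mathrm{kgl}_S \qquad (j\ge 0),
\]
so the connectivity assertion becomes: $\omega^\infty(\Sigma^{2j,j}\mathrm{kgl}_S)$ is $(j-d)$-connective as a Nisnevich sheaf of spectra.

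Next I would strip off the simplicial suspensions. Writing $\Sigma^{2j,j} = \Sigma^j_{S^1}\circ(\mathbb{G}_m^{\wedge j}\wedge -)$ and using that $\omega^\infty$ is exact, hence commutes with $S^1$-suspension, one gets $\omega^\infty(\Sigma^{2j,j}\mathrm{kgl}_S)\simeq \Sigma^j\,\omega^\infty(\mathbb{G}_m^{\wedge j}\wedge\mathrm{kgl}_S)$. Since $\mathrm{SH}(S)^{\mathrm{veff}}$ is closed under smash products and $\widetilde{\Sigma}^\infty\mathbb{G}_m$ is a retract of $\Sigma^\infty_+\mathbb{G}_m$, the spectrum $\mathbb{G}_m^{\wedge j}\wedge\mathrm{kgl}_S$ is again very effective, so the problem reduces to the general claim: \emph{$\omega^\infty$ carries very effective motivic spectra to $(-d)$-connective Nisnevich sheaves of spectra}. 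This is the heart of the argument and the only point where the new connectivity theorem enters; over a field ($d=0$) it is classical.

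To prove that claim I would use the description $\mathrm{SH}(S)^{\mathrm{veff}} = \Sigma^\infty_{\mathbb{G}_m}\big(\mathrm{SH}^{S^1}(S)_{\ge 0}\big)$ to write a very effective $F$ as $\Sigma^\infty_{\mathbb{G}_m}G$ with $G$ a $0$-connective $\mathbb{A}^1$-invariant Nisnevich sheaf of spectra; then, $\omega^\infty$ being right adjoint to $\Sigma^\infty_{\mathbb{G}_m}$, the telescope formula gives $\omega^\infty F \simeq \colim_m \Omega^m_{\mathbb{G}_m}\big(\mathbb{G}_m^{\wedge m}\wedge G\big)$, all functors taken in $\mathrm{SH}^{S^1}(S)$. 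At the level of presheaves, $\mathbb{G}_m^{\wedge m}\wedge G$ is $m$-connective (since $\widetilde{\Sigma}^\infty\mathbb{G}_m$ is $1$-connective and $G$ is $0$-connective), so by Corollary~\ref{corollaryshiftedstable} its motivic localisation $\mathbb{G}_m^{\wedge m}\wedge G \in \mathrm{SH}^{S^1}(S)$ is $(m-d)$-connective; and $\Omega_{\mathbb{G}_m}$ lowers connectivity by at most $1$ (an $n$-connective Nisnevich sheaf of spectra has $n$-connective sections over every scheme, by the finite Nisnevich cohomological dimension of the spectral spaces in play), so each $\Omega^m_{\mathbb{G}_m}(\mathbb{G}_m^{\wedge m}\wedge G)$ is $(-d)$-connective, and the filtered colimit over $m$ remains $(-d)$-connective. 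I expect the main obstacle to be precisely this step: one must make sure that the losses of $d$ at the successive $\mathbb{G}_m$-levels do not accumulate — Corollary~\ref{corollaryshiftedstable} is applied \emph{once}, to the $m$-connective presheaf $\mathbb{G}_m^{\wedge m}\wedge G$, not $m$ times — and one must be careful with the model used to compute $\omega^\infty$ of a $\mathbb{G}_m$-spectrum. Applying $\Sigma^j$ then yields that $\omega^\infty\mathrm{Fil}^j_{\mathrm{slice}}\mathrm{KGL}_S$ is $(j-d)$-connective.

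Finally, assume $d<\infty$. Then the connectivities $j-d$ tend to $+\infty$, so $\lim_j \omega^\infty\mathrm{Fil}^j_{\mathrm{slice}}\mathrm{KGL}_S = 0$. Applying the same estimate to $\mathbb{G}_m^{\wedge k}\wedge\mathrm{kgl}_S$ for all $k\in\mathbb{Z}$ (for $j+k\ge 0$ this spectrum is very effective and the argument above applies verbatim), and using that $\omega^\infty$ commutes with limits and with $\mathbb{G}_m$-twists while the objects $\Sigma^\infty_+U\wedge\mathbb{G}_m^{\wedge k}$ ($U$ smooth affine over $S$, $k\in\mathbb{Z}$) generate $\mathrm{SH}(S)$, one upgrades this to $\lim_j\mathrm{Fil}^j_{\mathrm{slice}}\mathrm{KGL}_S = 0$ in $\mathrm{SH}(S)$, i.e., the slice filtration on $\mathrm{KGL}_S$ is convergent.
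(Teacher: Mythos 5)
Your overall strategy --- reduce via Bott periodicity and $f_j\mathrm{KGL}_S\simeq\Sigma^{2j,j}\mathrm{kgl}_S$ to the claim that $\omega^\infty$ sends very effective spectra to $(-d)$-connective Nisnevich sheaves, and prove that claim on generators via the telescope formula $\omega^\infty\Sigma^\infty_{\mathbb{G}_m}G\simeq\colim_m\Omega^m_{\mathbb{G}_m}L_{\mathrm{mot}}(\mathbb{G}_m^{\wedge m}\wedge G)$ --- is indeed the shape of the argument the paper is citing (Bachmann--Elmanto--Morrow, Proposition~3.44 and Lemma~4.8, with Corollary~\ref{corollaryshiftedstable} supplying the connectivity input). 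But the step you yourself flag as the crux is wrong. The spectrum $\widetilde{\Sigma}^\infty\mathbb{G}_m$ is \emph{not} $1$-connective: $\mathbb{G}_m$ pointed at $1$ is not a connected presheaf (its $\pi_0$ is $\mathcal{O}^\times$), so its reduced suspension spectrum is only $0$-connective; already over a perfect field one has $\pi_0\, L_{\mathrm{mot}}\widetilde{\Sigma}^\infty_{S^1}\mathbb{G}_m^{\wedge m}=\underline{K}^{MW}_m\neq 0$. (You are presumably thinking of $\mathbb{P}^1\simeq S^1\wedge\mathbb{G}_m$, whose reduced suspension spectrum is $1$-connective.) Hence $\mathbb{G}_m^{\wedge m}\wedge G$ is only $0$-connective as a presheaf, Corollary~\ref{corollaryshiftedstable} only gives $(-d)$-connectivity of its motivic localisation, and your bookkeeping ``gain $m$ from the smash, lose at most $m$ from $\Omega^m_{\mathbb{G}_m}$'' collapses: with the correct input it yields $(-d-m)$-connectivity, which tends to $-\infty$ with $m$.

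What the argument actually needs --- and what constitutes the real content of the cited proof --- is that $\Omega_{\mathbb{G}_m}$ does \emph{not} lower the connectivity of motivically local ($\mathbb{A}^1$-invariant, Nisnevich-local) spectra at all. This is not a cohomological-dimension estimate (your parenthetical justification gives at best a loss of $\dim+1$ per loop, since $\mathbb{G}_m\times U$ is not local even when $U$ is); over a field it is Morel's theory of contractions, and over a general base it rests on Gersten-type inputs of the kind appearing in Proposition~\ref{propositiongersteninjectivity}. A second, smaller gap: you take the very effectivity of $\mathrm{kgl}_S$ as an external input, but Bachmann--Elmanto--Morrow establish it only for regular noetherian $S$ of dimension at most one; for general qcqs $S$ of finite valuative dimension it is part of what the corollary asserts and must itself be proved using Corollary~\ref{corollaryshiftedstable}. (The reduction from all very effective spectra to the generators $\Sigma^\infty_{\mathbb{G}_m}G$ also needs the phrase ``generated under colimits and extensions'' rather than your literal equality $\mathrm{SH}(S)^{\mathrm{veff}}=\Sigma^\infty_{\mathbb{G}_m}\bigl(\mathrm{SH}^{S^1}(S)_{\geqslant 0}\bigr)$, though this one is harmless since $\omega^\infty$ preserves colimits and connectivity is closed under colimits and extensions.)
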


\begin{proof}
    The proof is exactly as in \cite[Proposition~3.44 and Lemma~4.8]{bachmann_A^1-invariant_2025} (where the proof is stated for regular noetherian $S$ of dimension at most one), using the shifted stable $\mathbb{A}^1$-connectivity theorem (Corollary~\ref{corollaryshiftedstable}).
\end{proof}

\subsection*{Acknowledgements.}

\vspace{-\parindent}
\hspace{\parindent}

The authors would like to express their sincere gratitude to Elden Elmanto for his constant support throughout this project and for sharing many insights, as well as, some unpublished notes. We would also like to thank Frédéric Déglise, Marc Hoyois, and Brian Shin for related discussions, Neeraj Deshmukh for comments on a draft of this paper, and the referee for several
helpful comments and suggestions.

This work was done while the first-named author 
was partially supported by 
the SFB 1085 Higher Invariants, the Institute for Advanced Study, and the Simons Foundation, and while the second-named author received funding from the NSERC Discovery grant RGPIN-2025-07114, ``\textit{Motivic cohomology: theory and applications}'' and the PNRR grant CF 44/14.11.2022 ``\textit{Cohomological
		Hall algebras of smooth surfaces and applications}'', and hospitality from Chennai Mathematical Institute.

\bibliographystyle{alpha}
	
{\footnotesize
\bibliography{biblio.bib}

@book {jaffard_theorie_1960,
    AUTHOR = {Jaffard, Paul},
     TITLE = {Th\'{e}orie de la dimension dans les anneaux de polynomes},
    SERIES = {M\'{e}mor. Sci. Math., Fasc. 146},
 PUBLISHER = {Gauthier-Villars, Paris},
      YEAR = {1960},
     PAGES = {79}
}

@article {hoyois_quadratic_2014,
    AUTHOR = {Hoyois, Marc},
     TITLE = {A quadratic refinement of the
              {G}rothendieck-{L}efschetz-{V}erdier trace formula},
   JOURNAL = {Algebr. Geom. Topol.},
  FJOURNAL = {Algebraic \& Geometric Topology},
    VOLUME = {14},
      YEAR = {2014},
    NUMBER = {6},
     PAGES = {3603--3658},
      ISSN = {1472-2747},
}

@article {morel_homotopy_1999,
    AUTHOR = {Morel, Fabien and Voevodsky, Vladimir},
     TITLE = {{${\bf A}^1$}-homotopy theory of schemes},
   JOURNAL = {Inst. Hautes \'{E}tudes Sci. Publ. Math.},
  FJOURNAL = {Institut des Hautes \'{E}tudes Scientifiques. Publications
              Math\'{e}matiques},
    NUMBER = {90},
      YEAR = {1999},
     PAGES = {45--143 (2001)},
}

@inproceedings {ayoub_A1-algebraic_2024,
    AUTHOR = {Ayoub, Joseph},
     TITLE = {{$\mathbb{A}^1$-Algebraic Topology (after F. Morel)}},
 BOOKTITLE = {{Proceedings of the PCMI 2024 Graduate Summer School on Motivic Homotopy: Theory and Applications}},
      YEAR = {2024},
}

@unpublished{bachmann_A^1-invariant_2025,
	author = {Bachmann, Tom and Elmanto, Elden and Morrow, Matthew},
	note = {\url{https://arxiv.org/abs/2508.09915}},
	title = {{{$\mathbb{A}^1$}-invariant motivic cohomology of schemes}},
        year = {2025},
	}

@unpublished{lurie_higher_2017,
	author = {Lurie, Jacob},
	note = {\url{https://people.math.harvard.edu/~lurie/papers/HA.pdf}},
	title = {{Higher algebra}},
	year = {2017}}

@incollection {elden_antieau_primer,
    AUTHOR = {Antieau, Benjamin and Elmanto, Elden},
     TITLE = {A primer for unstable motivic homotopy theory},
 BOOKTITLE = {Surveys on recent developments in algebraic geometry},
    SERIES = {Proc. Sympos. Pure Math.},
    VOLUME = {95},
     PAGES = {305--370},
 PUBLISHER = {Amer. Math. Soc., Providence, RI},
      YEAR = {2017},
      ISBN = {978-1-4704-3557-8},
   MRCLASS = {14F42 (55R50)},
  MRNUMBER = {3727503},
MRREVIEWER = {Serge\ A.\ Yagunov},
       DOI = {10.1090/pspum/095/01637},
       URL = {https://doi.org/10.1090/pspum/095/01637},
}

@article {clausen_hyperdescent_2021,
    AUTHOR = {Clausen, Dustin and Mathew, Akhil},
     TITLE = {Hyperdescent and \'{e}tale {$K$}-theory},
   JOURNAL = {Invent. Math.},
  FJOURNAL = {Inventiones Mathematicae},
    VOLUME = {225},
      YEAR = {2021},
    NUMBER = {3},
     pages = {981--1076},
      issn = {0020-9910},
       doi = {10.1007/s00222-021-01043-3},
       url = {https://doi.org/10.1007/s00222-021-01043-3},
}

@incollection {colliot-thelene_bloch_1997,
    AUTHOR = {Colliot-Th\'{e}l\`ene, Jean-Louis and Hoobler, Raymond T. and Kahn,
              Bruno},
     TITLE = {The {B}loch-{O}gus-{G}abber theorem},
 BOOKTITLE = {Algebraic {$K$}-theory ({T}oronto, {ON}, 1996)},
    SERIES = {Fields Inst. Commun.},
    VOLUME = {16},
     PAGES = {31--94},
 PUBLISHER = {Amer. Math. Soc., Providence, RI},
      YEAR = {1997}
}

@article {voevodsky_motivic_2011,
    AUTHOR = {Voevodsky, Vladimir},
     TITLE = {On motivic cohomology with {$\bold Z/l$}-coefficients},
   JOURNAL = {Ann. of Math. (2)},
  FJOURNAL = {Annals of Mathematics. Second Series},
    VOLUME = {174},
      YEAR = {2011},
    NUMBER = {1},
     PAGES = {401--438},
}

@article{elmanto_cdh_2021,
	author = {Elmanto, Elden and Hoyois, Marc and Iwasa, Ryomei and Kelly, Shane},
	journal = {Math. Ann.},
	number = {3-4},
	pages = {1011--1045},
	title = {Cdh descent, cdarc descent, and {M}ilnor excision},
	volume = {379},
	year = {2021},
	bdsk-url-1 = {https://mathscinet.ams.org/mathscinet-getitem?mr=4238259}}

@article {ayoub_P^1_connectivity,
    AUTHOR = {Ayoub, Joseph},
     TITLE = {{$\Bbb{P}^1$}-localisation et une classe de
              {K}odaira-{S}pencer arithm\'etique},
   JOURNAL = {Tunis. J. Math.},
  FJOURNAL = {Tunisian Journal of Mathematics},
    VOLUME = {3},
      YEAR = {2021},
    NUMBER = {2},
     PAGES = {259--308},
      ISSN = {2576-7658,2576-7666},
   MRCLASS = {14F42 (14F06 14F10 14F40 14G25)},
  MRNUMBER = {4190469},
MRREVIEWER = {Federico\ Scavia},
       DOI = {10.2140/tunis.2021.3.259},
       URL = {https://doi.org/10.2140/tunis.2021.3.259},
}

@article {ayoub_counter-example_A1_connectivity,
    AUTHOR = {Ayoub, Joseph},
     TITLE = {Un contre-exemple \`a{} la conjecture de {$\Bbb
              A^1$}-connexit\'e{} de {F}. {M}orel},
   JOURNAL = {C. R. Math. Acad. Sci. Paris},
  FJOURNAL = {Comptes Rendus Math\'ematique. Acad\'emie des Sciences. Paris},
    VOLUME = {342},
      YEAR = {2006},
    NUMBER = {12},
     PAGES = {943--948},
      ISSN = {1631-073X,1778-3569},
   MRCLASS = {14F35 (19D45)},
  MRNUMBER = {2235615},
MRREVIEWER = {Clark\ Barwick},
       DOI = {10.1016/j.crma.2006.04.017},
       URL = {https://doi.org/10.1016/j.crma.2006.04.017},
}

@article {deshmukh-hogadi-kulkarni-yadav_presentation_lemma_noetherian_domains,
    AUTHOR = {Deshmukh, Neeraj and Hogadi, Amit and Kulkarni, Girish and
              Yadav, Suraj},
     TITLE = {Gabber's presentation lemma over noetherian domains},
   JOURNAL = {J. Algebra},
  FJOURNAL = {Journal of Algebra},
    VOLUME = {569},
      YEAR = {2021},
     PAGES = {169--179},
      ISSN = {0021-8693,1090-266X},
   MRCLASS = {14F20 (14F42)},
  MRNUMBER = {4187232},
MRREVIEWER = {Guillermo\ Corti\~nas},
       DOI = {10.1016/j.jalgebra.2020.10.036},
       URL = {https://doi.org/10.1016/j.jalgebra.2020.10.036},
}

@article {druzhinin_a1_connectivity,
    AUTHOR = {Druzhinin, Anderi Eduardovich},
     TITLE = {Stable {$\Bbb A^1$}-connectivity over a base},
   JOURNAL = {J. Reine Angew. Math.},
  FJOURNAL = {Journal f\"ur die Reine und Angewandte Mathematik. [Crelle's
              Journal]},
    VOLUME = {792},
      YEAR = {2022},
     PAGES = {61--91},
      ISSN = {0075-4102,1435-5345},
   MRCLASS = {14F42},
  MRNUMBER = {4504091},
MRREVIEWER = {Daniel\ Dugger},
       DOI = {10.1515/crelle-2022-0048},
       URL = {https://doi.org/10.1515/crelle-2022-0048},
}

@misc{elmanto_morrow,
    author = {Elmanto, Elden and Morrow, Matthew},
	note = {\url{https://arxiv.org/abs/2309.08463}},
	title = {{Motivic cohomology of equicharacteristic schemes}},
	year = {2023}}

@inproceedings {gersten,
    AUTHOR = {Gersten, S. M.},
     TITLE = {Some exact sequences in the higher {$K$}-theory of rings},
 BOOKTITLE = {Algebraic {$K$}-theory, {I}: {H}igher {$K$}-theories ({P}roc.
              {C}onf., {B}attelle {M}emorial {I}nst., {S}eattle, {W}ash.,
              1972)},
    SERIES = {Lecture Notes in Math., Vol. 341},
     PAGES = {211--243},
 PUBLISHER = {Springer, Berlin},
      YEAR = {1973},
   MRCLASS = {14C10 (13D15)},
  MRNUMBER = {0354660},
MRREVIEWER = {J.-P. Jouanolou},
}

@unpublished{bouis_beilinson_2025,
	author = {Bouis, Tess and Kundu, Arnab},
	note = {\url{https://arxiv.org/abs/2506.09910}},
	title = {{Beilinson--Lichtenbaum phenomenon for motivic cohomology}},
	year = {2025}}

@misc{asok_unstable_2009,
title={{The unstable $n$-$\mathbb{A}^1$-connectivity theorem}},
author={Asok, Aravind},
year={2009},
note={\url{https://dornsife.usc.edu/aravind-asok/wp-content/uploads/sites/213/2023/06/Unstablehigherconnectivity.pdf}},
}

@article {bondarko_dimensional_2017,
    AUTHOR = {Bondarko, Mikhail and D\'{e}glise, Fr\'{e}d\'{e}ric},
     TITLE = {Dimensional homotopy t-structures in motivic homotopy theory},
   JOURNAL = {Adv. Math.},
  FJOURNAL = {Advances in Mathematics},
    VOLUME = {311},
      YEAR = {2017},
     PAGES = {91--189},
      ISSN = {0001-8708,1090-2082},
}

@article {ayoub_sixI_2007,
    AUTHOR = {Ayoub, Joseph},
     TITLE = {Les six op\'{e}rations de {G}rothendieck et le formalisme des
              cycles \'{e}vanescents dans le monde motivique. {I}},
   JOURNAL = {Ast\'{e}risque},
  FJOURNAL = {Ast\'{e}risque},
    NUMBER = {314},
      YEAR = {2007},
     PAGES = {x+466 pp. (2008)},
      ISSN = {0303-1179,2492-5926},
      ISBN = {978-2-85629-244-0},
}

@unpublished{phd-thesis,
    title={Torsors on Smooth Algebras over Valuation Rings},
    author={Kundu, Arnab},
    year={2023},
    type={PhD Thesis},
    url={https://cnrs.hal.science/tel-04129990/},
    note={\url{https://cnrs.hal.science/tel-04129990/}},
    keywords={amar},
    shorthand={Thesis},
}

@article {morel_stable_a1_connectivity,
    AUTHOR = {Morel, Fabien},
     TITLE = {The stable {${\Bbb A}^1$}-connectivity theorems},
   JOURNAL = {$K$-Theory},
  FJOURNAL = {$K$-Theory. An Interdisciplinary Journal for the Development,
              Application, and Influence of $K$-Theory in the Mathematical
              Sciences},
    VOLUME = {35},
      YEAR = {2005},
    NUMBER = {1-2},
     PAGES = {1--68},
      ISSN = {0920-3036,1573-0514},
   MRCLASS = {14F35 (19E20 19G12)},
  MRNUMBER = {2240215},
MRREVIEWER = {Oliver\ R\"ondigs},
       DOI = {10.1007/s10977-005-1562-7},
       URL = {https://doi.org/10.1007/s10977-005-1562-7},
}

@book {morel_a1_topology_field,
    AUTHOR = {Morel, Fabien},
     TITLE = {{$\Bbb A^1$}-algebraic topology over a field},
    SERIES = {Lecture Notes in Mathematics},
    VOLUME = {2052},
 PUBLISHER = {Springer, Heidelberg},
      YEAR = {2012},
     PAGES = {x+259},
      ISBN = {978-3-642-29513-3},
   MRCLASS = {14F35 (14F05)},
  MRNUMBER = {2934577},
MRREVIEWER = {Matthias\ Wendt},
       DOI = {10.1007/978-3-642-29514-0},
       URL = {https://doi.org/10.1007/978-3-642-29514-0},
}

@incollection {morel_introduction_A1_homotopy,
    AUTHOR = {Morel, Fabien},
     TITLE = {An introduction to {$\Bbb A^1$}-homotopy theory},
 BOOKTITLE = {Contemporary developments in algebraic {$K$}-theory},
    SERIES = {ICTP Lect. Notes},
    VOLUME = {XV},
     PAGES = {357--441},
 PUBLISHER = {Abdus Salam Int. Cent. Theoret. Phys., Trieste},
      YEAR = {2004},
      ISBN = {92-95003-21-7},
   MRCLASS = {19E08 (14F35 55-02)},
  MRNUMBER = {2175638},
MRREVIEWER = {Daniel\ C.\ Isaksen},
}

@article {schmidt_strunk,
    AUTHOR = {Schmidt, Johannes and Strunk, Florian},
     TITLE = {Stable {$\mathbb{A}^1$}-connectivity over {D}edekind schemes},
   JOURNAL = {Ann. K-Theory},
  FJOURNAL = {Annals of K-Theory},
    VOLUME = {3},
      YEAR = {2018},
    NUMBER = {2},
     PAGES = {331--367},
      ISSN = {2379-1683},
   MRCLASS = {14F42 (55P42)},
  MRNUMBER = {3781430},
MRREVIEWER = {Satoshi Mochizuki},
       DOI = {10.2140/akt.2018.3.331},
}

@misc{stacks-project,
  author       = {The {Stacks project authors}},
  title        = {The Stacks project},
  howpublished = {\url{https://stacks.math.columbia.edu}},
  year         = {2025},
}
}

\medskip

{\footnotesize 
\textsc{Institute for Advanced Study, Princeton, New Jersey, 08540 USA} \par  
  \textit{Email address:} \texttt{tbouis@ias.edu} \par
  \textit{URL:} \texttt{https://tessbouis.com} \par
  
  \addvspace{\medskipamount}

  \textsc{Simion Stoilow Institute of Mathematics of the Romanian Academy (IMAR), Bucharest, Romania} \par  
  \textit{Email address:} \texttt{akundu.math@gmail.com} \par
  \textit{URL:} \texttt{https://arnabkundu.com} \par
}

\end{document}